\documentclass[10pt]{article}

\usepackage{amsthm}
\usepackage[section]{algorithm}
\usepackage[title]{appendix}
\usepackage[affil-it]{authblk}
\usepackage{fancyhdr}
\usepackage{ifpdf}
\usepackage{xcolor}
\usepackage{hyperref}
\usepackage{amsfonts,amsmath,amssymb,amsfonts,amscd}
\usepackage{boldtensors}
\usepackage{booktabs}
\usepackage{float}
\usepackage{mathtools}
\usepackage{soul}
\usepackage{subcaption}
\usepackage{url}
\usepackage{array}
\ifpdf
\hypersetup{
    pdftitle={Structure-Aware Tensorial Model Reduction},
    pdfauthor={Arjun Vijaywargiya, Eric C. Cyr, Anthony Gruber}
}
\fi

\usepackage[noend]{algpseudocode}
\usepackage{minted}
\setminted{
    style=pastie,
    fontsize=\small,
    linenos,
    frame=none
}

\DeclareMathOperator*{\argmin}{arg\,min}

\renewcommand{\epsilon}{\varepsilon}

\newcommand{\nn}[1]{\left\|#1\right\|}

\let\originalleft\left
\let\originalright\right
\renewcommand{\left}{\mathopen{}\mathclose\bgroup\originalleft}
\renewcommand{\right}{\aftergroup\egroup\originalright}

\usepackage{comment}

\definecolor{BYUblue}{RGB}{0, 61, 165}
\definecolor{sandiablue}{RGB}{0, 50, 90}
\definecolor{sandiared}{RGB}{130, 36, 51}
\definecolor{burntorange}{cmyk}{0, 0.65, 1, .09}
\definecolor{NDgold}{RGB}{174, 145, 66}

\usepackage{ifthen}
\newcommand{\rev}[3][default]{%
    \ifthenelse{\equal{#1}{arjun}}{\def\revisioncolor{NDgold}}{%
    \ifthenelse{\equal{#1}{shane}}{\def\revisioncolor{BYUblue}}{%
    \ifthenelse{\equal{#1}{anthony}}{\def\revisioncolor{cyan}}{%
    \def\revisioncolor{cyan}}}}%
    \ifthenelse{\equal{#2}{}}{}{%
        \textcolor{red}{\st{#2}}%
    }%
    \ifthenelse{\equal{#3}{}}{}{%
        \textcolor{\revisioncolor}{#3}%
    }%
}  
\usepackage[nameinlink]{cleveref}

\definecolor{darkgreen}{rgb}{0.0, 0.5, 0.0}
\definecolor{darkblue}{rgb}{0.0, 0.0, 0.6}
\definecolor{darkred}{rgb}{0.5, 0.0, 0.0}
\hypersetup{
    colorlinks=true,
    linkcolor=darkgreen,    
    citecolor=darkblue,     
    urlcolor=darkred,       
}

\crefformat{equation}{#2(#1)#3}
\numberwithin{equation}{section}

\theoremstyle{plain}
\newtheorem{theorem}{Theorem}
\newtheorem{proposition}{Proposition}
\newtheorem{lemma}{Lemma}
\newtheorem{corollary}{Corollary}

\theoremstyle{remark}
\newtheorem{remark}{Remark}

\theoremstyle{definition}
\newtheorem{definition}{Definition}

\theoremstyle{assumption}
\newtheorem{assumption}{Assumption}

\numberwithin{theorem}{section}
\numberwithin{proposition}{section}
\numberwithin{lemma}{section}
\numberwithin{corollary}{section}
\numberwithin{remark}{section}
\numberwithin{definition}{section}

\crefname{theorem}{Theorem}{Theorems}
\Crefname{theorem}{Theorem}{Theorems}
\crefname{proposition}{Proposition}{Propositions}
\Crefname{proposition}{Proposition}{Propositions}
\crefname{lemma}{Lemma}{Lemmata}
\Crefname{lemma}{Lemma}{Lemmata}
\crefname{corollary}{Corollary}{Corollaries}
\Crefname{corollary}{Corollary}{Corollaries}
\crefname{algorithm}{Algorithm}{Algorithms}
\Crefname{algorithm}{Algorithm}{Algorithms}
\crefname{appendix}{Appendix}{Appendices}
\Crefname{appendix}{Appendix}{Appendices}

\setminted{
    style=pastie,
    fontsize=\normalsize,
    linenos,
    frame=none
}

\begin{document} 

\pdfinfo{
   /Author (Arjun Vijaywargiya, Eric C. Cyr, Anthony Gruber)
   /Title (Structure-Aware Tensorial Model Reduction)
   /Keywords (parametric model reduction, tensor methods, Hamiltonian systems, structure preservation, scientific machine learning)
}

\title{Structure-Aware Tensorial Model Reduction}

\author[1]{Arjun~Vijaywargiya}
\author[2]{Eric C. Cyr}
\author[2]{Anthony Gruber\thanks{Corresponding author. E-mail: \href{mailto:adgrube@sandia.gov}{adgrube@sandia.gov}.}}

\affil[1]{\normalsize Oden Institute for Computational Engineering and Sciences, University of Texas at Austin}
\affil[2]{\normalsize Scientific Machine Learning, Center for Computing Research, Sandia National Laboratories}

\date{}

\maketitle

\vspace{-.25in}

\begin{abstract}
\noindent

This work investigates a two-stage method for constructing projection-based reduced-order models (ROMs) of parameterized partial differential equations (PDEs).  Based on established tensorial ROM methodology, the proposed approach reduces dimensionality offline by encoding solution snapshots using a multi-linear Tucker factorization, so that a reduced basis which varies nonlinearly with PDE parameters can be rapidly constructed online and used in a Galerkin ROM.  Two novel extensions of this strategy, tailored to the cases of structured PDEs and sparse parameter sampling, are presented: the construction of reduced bases orthonormalized with respect to a general discrete inner product, and the interpolation of encoded states via radial basis functions.  Basic representation and ROM error estimates are presented demonstrating the validity of these modifications, and the approach is challenged on examples where monolithic-basis ROMs are known to struggle, including a realistic instance of Maxwell's equations in 3D.  Results suggest that the proposed nonlinear basis ROM can effectively mitigate linear restrictions on Kolmogorov $n$-width while improving upon previous tensorial ROM technology, particularly in the highly nonlinear and data-limited regimes characteristic of practical use cases.

\end{abstract}




\section{Introduction}\label{sec:intro}

Consider a ``full-order model' (FOM) of the form
\begin{equation}\label{eq:FOM}
    \dot{~x}(t,~\mu) = ~f\big(t,~x(t,~\mu),~\mu\big), \qquad ~x(0,~\mu) = ~x_0(~\mu),
\end{equation}
where $~x\in\mathbb{R}^N$ is the system state with initial condition $~x_0$ and $~\mu\in\mathbb{R}^p$ are system parameters.  Here, the model \eqref{eq:FOM} is assumed to arise via the semi-discretization of a parameterized system of first-order PDEs, so that $N\gg 1$ is some large number of ``coefficients'' or ``grid points''.  Standard projection-based model reduction relies on the following key assumption: since the solutions to \cref{eq:FOM} inherit regularity from the governing PDE,
the intrinsic dimensionality of the system \eqref{eq:FOM} is much less than $N$.  Therefore, the FOM should be well-approximated with a linear trial space of dimension $n\ll N$.

This low-dimensionality assumption has led to a wide variety of reduced-basis methods \cite{Benner2015,hesthaven2022reduced,parish2025residual} 
sharing the following basic principles.  First, the state variable $~x\approx ~U\hat{~x}$ is approximated using a linear trial space ${\rm im}\,~U$ spanned by the columns of a reduced-basis matrix $~U\in\mathbb{R}^{N\times n}$.  Inserting this approximation into the FOM \eqref{eq:FOM} and projecting onto a (potentially identical) subspace ${\rm im}\,~V$ for $~V\in\mathbb{R}^{N\times m}$ ($m\ll N$) yields a Petrov-Galerkin reduced-order model (ROM) in terms of the coefficients $\hat{~x}\in\mathbb{R}^n$,
\begin{equation}\label{eq:ROM}
    ~V^\intercal~U\dot{\hat{~x}}(t,~\mu) = ~V^\intercal~f\big(t,~U\hat{~x}(t,~\mu), ~\mu \big), \qquad \hat{~x}(0,~\mu) = ~U^\intercal~x_0(~\mu).
\end{equation}
Provided that the right-hand side of \eqref{eq:ROM} is pre-computable or effectively approximated in a pre-computable fashion, the ROM \eqref{eq:ROM} provides a fast approximation to the dynamics of the desired FOM via the simulation of $m\ll N$ ordinary differential equations.  However, observe that many variations on this procedure are possible.  For example, the operations of time integration and Petrov-Galerkin projection are generally not commutative, and exchanging the order leads to an entirely different system with different dynamical properties.  This has led to a large amount of interest into ``discretize-then-optimize'' ROMs somewhat distinct from the ``optimize-then-discretize'' flavor presented in \eqref{eq:ROM} \cite{carlberg2017galerkin,quarteroni2014reduced,benner2014model,berman2023randomized}.

The utility of the ROM depends strongly on the construction of the reduced bases $~U$ and $~V$ defining the trial and test spaces for the Petrov-Galerkin projection. Many options are available at present (c.f. \cite{hesthaven2022reduced,Benner2015,Ghattas_Willcox_2021}), each with benefits and drawbacks.
Specializing to the Galerkin case $~U=~V$ for simplicity, many common and well-studied approaches are data-driven rather than analytical.  Arguably, the most popular such technique is the Proper Orthogonal Decomposition (POD), which aims to maximize the variance in a set of snapshot data consisting of discrete PDE trajectories
\begin{equation*}
    ~X\in\mathbb{R}^{N\times T\times P}, \qquad [~X]^i_{\alpha s} = x^i(t_\alpha, ~\mu_s),
\end{equation*}
where $x^i(t_{\alpha},~\mu_s)\in\mathbb{R}$ denotes the $i^{\rm th}$ component of the state $~x$ evaluated at time $t_{\alpha}$ and parameteric instance $~\mu_s$.  Considering the column-wise matricization $~X_{(1)} = \mathrm{cvec}_{23}~X \in \mathbb{R}^{N\times TP}$ of these snapshots, where $[{\rm cvec}_{23}~X]^i_{(s-1)T+\alpha} = [~X]^i_{\alpha s}$, POD minimizes the projection error $|~P_U^\perp~X_{(1)}|^2 \coloneqq |(~I-~U~U^\intercal)~X_{(1)}|^2$ onto the linear subspace ${\rm im}\,~U$ in the Frobenius norm.  This is a classical problem whose solution $~U\in\mathbb{R}^{N\times n}$ is efficiently given using the thin SVD $~X~W = ~U~\Sigma$.  Moreover, the reduced basis $~U$ constructed in this way comes with a guarantee of monotonic error decay for dynamics captured by the training data (c.f. \eqref{eq:proj-error-mono}).  For this reason, POD-ROMs have cemented themselves as versatile and trustworthy tools for a wide variety of applications \cite{lieu2005pod,tang2025applications,azam2013investigation,cardoso2009development,florez2017applications,le2017accelerating,peters2021mode}.

Despite the utility of the POD and linear reduced-basis methods in general, they face an insurmountable drawback in many cases of interest due to their limited approximability.  For a set of semidiscrete PDE solutions 
\[\mathcal{F} = \{~x(t,~\mu)\mid ~x\,\,\rm{solves}\,\,\eqref{eq:FOM}, t\in [0,T], ~\mu\in\Omega \},\]
whose parameters lie in some compact set $\Omega$, their worst-case approximation error is quantified by the so-called Kolmogorov n-width \cite{kolmogorov1936beste,fisher1980width,unger2019kolmogorov}:
\[d_n(\mathcal{F}) = \inf_{~U\in\mathbb{R}^{N\times n}} \sup_{~x\in\mathcal{F}}\,\inf_{\hat{~x}\in\mathbb{R}^n} \big|~x - ~U\hat{~x}\big|_{\mathcal{F}},\]
where $|\cdot|_{\mathcal{F}}$ denotes an appropriate Hilbert space norm.  This notion provides a fundamental lower bound on the error incurred by solutions to the ROM \eqref{eq:ROM} when $~U=~V$ is linear, and is known to decay algebraically like $n^{-1/2}$ for wave-like problems involving transport or discontinuities \cite{greif2019decay}.  This means that halving the error in a linear subspace ROM can require quadrupling its dimension, making models such as \eqref{eq:ROM} prohibitively costly in practical outer-loop scenarios.

A great effort has been made to mitigate this difficulty through the development of projection-based ROMs with nonlinear reduced bases \cite{lee2020model,barnett2022quadmanifold,geelen2023quadmanifold,schwerdtner2024greedy,diaz2025kernel}.  These methods forego optimality guarantees on basis quality in exchange for greater approximation power, often yielding ROMs with significantly lower errors in the small-basis regime.  However, most of these methods focus on introducing nonlinearity with respect to the state $~x$, not the parameters $~\mu$, and also face significant drawbacks not seen in the linear case.  Most notably, these nonlinear methods offer limited to no assurance that increasing the basis size will actually decrease the practical approximation error. 
Optimization errors that occur during nonlinear basis construction (i.e., the training process) typically cause the resulting ROM to saturate at some basis size $n$, beyond which the ROM error stops decreasing and there is a crossover point $n'\ll N$ where the linear POD ROM achieves lower errors.  
Moreover, these methods are comparatively slow both offline and online: the use of a nonlinear basis converts even linear FOMs into nonlinear ROMs.  When the trial (and/or test) space is not polynomial in the state $~x$, this means that hyper-reduction is required to achieve a scalable ROM, potentially mitigating any benefits of using a small basis size.  

On the other hand, many PDEs of interest in practical applications (e.g., Maxwell's equations considered in \Cref{sec:numerics}) are state-linear but vary nonlinearly with parameters.  These problems may also have a slowly decaying Kolmogorov $n$-width due to their parametric variation, but, importantly, this can be mitigated with a state-linear reduced basis.  It turns out that, by incorporating nonlinear parametric variation in the basis $~U=~U(~\mu)$, analogues of the linear subspace ROMs \eqref{eq:ROM} can be constructed that carry over benefits such as rigorous error guarantees and monotonic improvement with basis dimension.  The remainder of this work will discuss a particular approach to this based on the pioneering work \cite{mamonov2022interpolatory}, as well as various extensions which improve its applicability to the problems considered here.  

The work \cite{mamonov2022interpolatory} provides a new take on projection-based model reduction using ideas from the tensor literature.  Here, the key idea is to build the reduced basis $~U = ~U(~\mu)$ with a two-stage process that preserves the characteristic offline/offline cost splitting as well as provable guarantees on the method's error.  First, an offline tensor factorization (CP, Tucker, or tensor-train) is performed on the snapshot data $~X$, leading to low-dimensional multilinear approximation $\tilde{~X}\approx~X$ expressed in terms of orthonormal bases for the different tensor dimensions and a relatively small ``core tensor'' of coefficients living in some latent space.  Notice that this provides a large dimension reduction, but is not enough for an effective model reduction on its own: the orthonormal bases produced by tensor factorization do not possess a global ordering, and therefore no conclusions can be drawn about the relative importance (i.e., contribution to data variance) of each basis vector.  However, this problem is surmountable.  Given a new parameter instance $~\mu$ online, this ``reduced database'' $\tilde{~X}$ generated by tensor factorization is queried at $~\mu$ via an interpolation or local least-squares procedure, and a reduced basis $~U(~\mu)$ is built (at the latent-space level!) from the SVD of the resulting ``interpolated'' coefficients and the orthonormal bases computed offline during tensor factorization.  Importantly, the necessary interpolation and local SVD procedures executed online scale only with the size of the core tensor and not the size of the original dimensions in $~X$ (c.f. \Cref{alg:tucker_basis}), making this pipeline computationally feasible and relatively efficient.  As an added benefit, it was shown in $\cite{mamonov2022interpolatory}$ that the resulting reduced basis satisfies an approximation error bound involving the Tucker reconstruction error, the parametric interpolation error, and the error in the parameter-local SVD.  All together, this strategy provides a fast, interpretable, and theoretically grounded strategy for introducing parametric nonlinearity into projection-based ROMs.  

With this said, the tensorial ROM strategy in \cite{mamonov2022interpolatory} is limited in a few noteworthy ways.  First, it is applicable only to the construction of reduced bases which are Euclidean-orthonormal, i.e., those satisfying $~U^\intercal~U=~I$.  This precludes or makes cumbersome the use of certain classes of ROMs which are structure-preserving or otherwise optimal in energy-norm (e.g., \cite{vijaywargiya2025tensor,parish2023impact,afkham2018symplectic}), since there is no way to guarantee the $~M$-orthogonality condition $~U^\intercal~M~U=~I$ for a given discrete inner product or ``mass matrix'' $~M\in\mathbb{R}^{N\times N}$.  Additionally, the interpolation procedures presented in \cite{mamonov2022interpolatory}, including standard Lagrange interpolation and a nearest-neighbor least-squares fit, are functional but not entirely adequate in the presence of sparse or high-dimensional data.  Lagrange interpolants are expensive in high dimensions, while the local least-squares fit is piecewise-linear and globally non-smooth.  It is shown in the experiments of \Cref{sec:numerics} that both of these can create issues in practical scenarios.
Finally, despite a useful bound on approximation error,  guarantees on the tensorial ROM error are presently limited to the case of time-dependent parabolic problems \cite{mamonov2025priori}, hindering the overall trustworthiness of the method outside of this setting.

The present work aims to address the aforementioned issues with the tensorial ROM.  First, an extension of the basis construction is presented that yields an $~M$-orthonormalized reduced basis $~U=~U(~\mu)$ for any symmetric and positive definite choice of $~M$.  Next, an interpolation procedure using local Gaussian radial basis functions (GRBFs) is introduced, yielding a piecewise-smooth response surface with minimal overhead in the high-dimensional case.  It is shown that the approximation error estimate of \cite{mamonov2022interpolatory} also carries over to this setting, and an additional \textit{a priori} bound on ROM error is proven for systems of the form \eqref{eq:FOM} under mild assumptions.  Finally, the numerical performance of this modified strategy is investigated and shown to improve upon both the monolithic basis approach and the previous tensorial ROM.


To summarize, the main contribution of this work is a performant extension of the tensorial ROM via more general reduced bases and a modified snapshot interpolation procedure.  More precisely, this involves:
\begin{itemize}
    \item A novel tensorial ROM basis construction algorithm incorporating orthonormality constraints in nonstandard inner products.   
    \item A novel snapshot interpolation scheme using Gaussian radial basis functions, designed to improve performance in the presence of sparse parametric sampling and highly nonlinear parametric dependence.  
    \item Bounds on appropriately weighted projection error and ROM error providing theoretical support for the proposed strategy.
    \item Experiments demonstrating improved accuracy and stability on systems with gradient flow and Hamiltonian structure, including a complex 3D Maxwell case from electromagnetics.
    \item Example code reproducing the numerical results. 
\end{itemize}

The remainder of this work is structured as follows. \Cref{sec:method} reviews the tensorial ROM as introduced in \cite{mamonov2022interpolatory}, and \Cref{sec:ours} introduces the proposed modifications involving non-Euclidean orthonormality and GRBF interpolation.  \Cref{sec:analysis} then proves bounds on the representation error of the reduced basis and the resulting error in the ROM.  \Cref{sec:numerics} investigates the performance of the proposed approach on various problems in parametric PDEs with gradient or Hamiltonian structure.  Finally, \Cref{sec:conclusion} summarizes the work and discusses future directions. 

\section{Tensorial ROM}\label{sec:method}

The first goal is to describe the tensorial ROM of \cite{mamonov2022interpolatory} and the proposed modifications.  While this technology is generally applicable to a variety of tensor decompositions, the exposition here will be restricted to the Tucker decomposition and the Higher-Order SVD (HOSVD) which approximates it, where the main ideas are effectively exercised.  
\subsection{The Higher-Order SVD}
Consider the FOM \eqref{eq:FOM}, defined in terms of a state variable $~{x} = ~{x}(t,~{\mu})\in\mathbb{R}^N$ depending on time and a vector of parameters $~{\mu}\in\mathbb{R}^p$.  Given a tensor of snapshots drawn at various time and parameter instances, its HOSVD factorization satisfies the following quasi-optimality result \cite{delathauwer2000multilinear}.

\begin{theorem}\label{thm:tucker}
    Consider a snapshot tensor $~X\in\mathbb{R}^{N\times T\times P}$ and fixed Tucker ranks $\bar N < N$, $\bar T < T$, and $\bar P < P$.  The HOSVD reconstruction
    \begin{align*}
        \tilde{~X} = \sum_{i=1}^{\bar{N}}\sum_{\alpha=1}^{\bar{T}}\sum_{s=1}^{\bar{P}} [~{C}]_{i\alpha s}~{w}^i\otimes~{\tau}^\alpha\otimes~{\nu}^s \approx ~X,
    \end{align*}
    satisfies the quasi-optimal error bound 
    \begin{align*}
        \big|~X-\tilde{~X}\big|_F \leq \sqrt{3}\big|~X-~X_{\rm opt}\big|_F,
    \end{align*} 
    where $~X_{\rm opt}$ is the best approximation to $~X$ among all tensors with the given ranks.  Here, $~C\in\mathbb{R}^{\bar N\times \bar T\times \bar P}$ is known as the ``core tensor'' and $\{~{w}^i\}_{i=1}^{\bar{N}},\{~{\tau}^\alpha\}_{\alpha=1}^{\bar{T}},\{~{\nu}^s\}_{s=1}^{\bar{P}}$ are unordered orthonormal sets of vectors.  
\end{theorem}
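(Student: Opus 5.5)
We follow the classical argument of De Lathauwer et al.\ for the truncated HOSVD. First, fix notation. For each mode $k\in\{1,2,3\}$, let $~X_{(k)}$ be the mode-$k$ matricization of $~X$. Let $~W\in\mathbb{R}^{N\times\bar N}$, $~T\in\mathbb{R}^{T\times \bar T}$, $~V\in\mathbb{R}^{P\times\bar P}$ collect the leading left singular vectors $~w^i,~\tau^\alpha,~\nu^s$ of $~X_{(1)},~X_{(2)},~X_{(3)}$. Let $~P_1=~W~W^\intercal$, $~P_2=~T~T^\intercal$, $~P_3=~V~V^\intercal$ be the corresponding orthogonal projectors. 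The core is $~C = ~X\times_1~W^\intercal\times_2~T^\intercal\times_3~V^\intercal$, so that
\begin{equation*}
\tilde{~X} = ~X\times_1~P_1\times_2~P_2\times_3~P_3 .
\end{equation*}
Orthonormality of each factor set is immediate from the SVD. This representation exhibits $\tilde{~X}$ in the form stated in \Cref{thm:tucker}.

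The key step is an orthogonal telescoping decomposition of the error:
\begin{equation*}
~X-\tilde{~X} = ~X\times_1(~I-~P_1) + ~X\times_1~P_1\times_2(~I-~P_2) + ~X\times_1~P_1\times_2~P_2\times_3(~I-~P_3).
\end{equation*}
The three terms are mutually orthogonal in the Frobenius inner product. The first lies in the range of $~I-~P_1$ in mode 1, while the other two lie in the range of $~P_1$ in mode 1. The second and third are separated in the same way by mode 2. Pythagoras therefore gives $|~X-\tilde{~X}|_F^2$ as the sum of the three squared norms. Because mode-$k$ multiplication by an orthogonal projector is a contraction in the Frobenius norm and commutes with multiplication in the other modes, each term is bounded by $|~X\times_k(~I-~P_k)|_F^2$. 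Written in matricized form, this equals $|(~I-~P_k)~X_{(k)}|_F^2=\sum_{j>\bar n_k}\sigma_j^{(k)2}$, the tail of the mode-$k$ singular values.

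It remains to compare each tail with the optimal error. Let $~X_{\rm opt}$ be any tensor with multilinear ranks at most $(\bar N,\bar T,\bar P)$. Then its matricization $(~X_{\rm opt})_{(k)}$ has rank at most $\bar n_k$. By Eckart--Young,
\begin{equation*}
\sum_{j>\bar n_k}\sigma_j^{(k)2}\le |~X_{(k)}-(~X_{\rm opt})_{(k)}|_F^2 = |~X-~X_{\rm opt}|_F^2 .
\end{equation*}
Summing over $k=1,2,3$ gives $|~X-\tilde{~X}|_F^2\le 3|~X-~X_{\rm opt}|_F^2$, and taking square roots yields the stated bound. For this we need a best approximation to exist; it does, since the set of tensors with bounded multilinear rank is closed. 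If one prefers to avoid the existence question, the infimum can be used instead.

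The main obstacle is the orthogonality and contraction bookkeeping in the telescoping step. There one must check carefully that the mode products commute and that the projectors in different modes preserve the Frobenius-orthogonal splitting. The rest is a direct application of Eckart--Young mode by mode.
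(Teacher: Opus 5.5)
Your proof is correct and follows the route the paper itself indicates. The paper only cites De Lathauwer et al.\ and records exactly the tail-sum bound $\big|~X-\tilde{~X}\big|_F^2\leq\Delta^2$ that your orthogonal telescoping produces, after which Eckart--Young on each unfolding gives the factor $\sqrt{3}$; the only difference is that the cited reference gets the tail-sum bound by zeroing slices of the full HOSVD core rather than by telescoping mode projectors, which does not change the argument.
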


\Cref{thm:tucker} provides the link between the HOSVD and its ``lower order'' namesake. On the other hand, note that the orthonormal bases it produces are globally unordered.  While each mode is individually ordered according to 
the SVD of the corresponding unfolding (c.f. \Cref{alg:hosvd}), there is no order relationship between the bases of different modes.  Since projection-based model reduction relies heavily on a small-and-ordered subset of basis vectors that are selected based on their information content, this creates additional work for the tensorial ROM approach mentioned in \Cref{sec:intro}.

To understand the computation of the HOSVD factorization, recall the following standard definition for tensor unfolding.

\begin{definition}[Mode-$k$ Unfolding \cite{Kolda2009TensorDA}]\label{def:mode-k}
    Let $~X\in\mathbb{R}^{I_1\times I_2\times...\times I_N}$ be a tensor.  The mode-$k$ unfolding $~X_{(k)}\in\mathbb{R}^{I_k\times I_1I_2...\widehat{I_k}...I_N}$ (hat indicates removal) is the matrix identifying element $(i_1,i_2,...,i_N)$ of $~X$ with element $(i_k,j)$ of $~X_{(k)}$, where
    \begin{align*}
        j = \sum_{\substack{n=1 \\ n\neq k}}^N\left[(i_n-1)\prod_{\substack{m=1 \\ m\neq k}}^{n-1}I_m\right].
    \end{align*}
\end{definition}
Observe that \Cref{def:mode-k} simply matricizes the given tensor so that $k$-th axis appears first and the remaining axes are unfolded ``column-wise'' (i.e., with Fortran ordering).  This leads to a simple SVD-based algorithm for computing the HOSVD.

\begin{algorithm}
\caption{Higher-Order SVD (HOSVD)}
\label{alg:hosvd}
\begin{algorithmic}[1]
\State \textbf{Input:} Snapshot tensor $~X \in \mathbb{R}^{N \times T \times P}$, target dimensions $(\bar{N}, \bar{T}, \bar{P})$.

\State Compute the top $\bar{N}$ left singular vectors of $~X_{(1)}$ to form $~W \in \mathbb{R}^{N \times \bar{N}}$.
\State Compute the top $\bar T$ left singular vectors of $~X_{(2)}$ to form $~T \in \mathbb{R}^{T\times \bar T}$.
\State Compute the top $\bar P$ left singular vectors of $~X_{(3)}$ to form $~S \in \mathbb{R}^{P \times \bar P}$.

\State Compute $[~C]_{i\alpha s} = \sum_{j,\beta,r} [~W]_{ij}[~X]_{j\beta r}[~T]_{\beta\alpha}[~S]_{rs}$ to form the core $~C \in \mathbb{R}^{\bar N \times \bar T \times \bar P}$

\State \textbf{Output:} Core $~C$ and factor matrices $~W, ~T, ~S$ defining $\tilde{~X}\approx~X$.
\end{algorithmic}
\end{algorithm}

\Cref{alg:hosvd} shows that the HOSVD of a 3-tensor $~X$ can be computed at the cost of a single SVD along each mode. Due to this structure, it is not difficult to show (see, e.g., \cite{delathauwer2000multilinear}) that the global error in the HOSVD is bounded by the sum of the local truncation errors incurred by each SVD, i.e.,
\begin{align}
    \big|~X-\tilde{~X}\big|_F^2 \leq \sum_{i=\bar{N}+1}^N\sigma_{1,i}^2 + \sum_{i=\bar{T}+1}^T\sigma_{2,i}^2 + \sum_{i=\bar{P}+1}^P\sigma_{3,i}^2 = \Delta^2, 
\end{align}
where $\sigma_{j,i}$ denotes the $i^{\rm th}$ singular value of the unfolding $~X_{(j)}$.  
This guarantees the alternative error bound in terms of the (small) truncation parameter $\varepsilon = \Delta / |~X|^2_F$,
\begin{equation}\label{eq:tucker_error}
    \big|~X-\tilde{~X}\big|_F \leq \varepsilon|~X|_F.
\end{equation}
which will be useful in the error estimation of Section~\ref{sec:analysis}.

\subsection{Reduced basis construction}

Remarkably, it is enough to manipulate the small core tensor $~{C}$ and factor matrices $~W,~T,~S$ in the HOSVD decomposition to build an effective ROM for \eqref{eq:FOM} whose reduced basis is tailored to any desired parameter value $~\mu$.
To see this, 
suppose there is a ``generalized index'' vector $~e=~e(~\mu)\in\mathbb{R}^{P}$ such that $\tilde{~X}_{\mu}\coloneqq \tilde{~X}~e(~\mu)\in\mathbb{R}^{N\times T}$ is a reasonable approximation to the inaccessible snapshot matrix at a test parameter $~\mu\in\mathbb{R}^p$.
It follows that the large matrix $\tilde{~X}_{\mu}$ can be expressed in terms of the previously computed HOSVD data and the vector $~e$
as
\begin{equation*}
    \tilde{~X}_{\mu} = \tilde{~X}~e = \sum_{i,\alpha,s} [~C]_{i\alpha s}(~e^\intercal~\nu^s)~{w}^i\otimes~{\tau}^\alpha = ~W(~C~S^\intercal~e)~T^\intercal \eqqcolon ~W~C_{\mu}~T^\intercal.
\end{equation*}
To see the advantage of this, notice that the thin SVD of the local snapshot approximation given by $\tilde{~X}_{\mu}~V_{\mu} = ~U_{\mu}~\Sigma_{\mu}$ is directly computable from the (small) local core matrix $~C_{\mu}\in\mathbb{R}^{\bar N\times \bar T}$ and these basis matrices: if $~C_{\mu}~V_c = ~U_c~\Sigma_c$ is the thin SVD of the core matrix, then
\begin{equation*}
    \tilde{~X}_{\mu}(~T~V_c) = ~W~C_{\mu}~V_c = (~W~U_c)~\Sigma_c.
\end{equation*}
This directly implies the thin SVD of $\tilde{~X}_{\mu}$, given in terms of the column-orthonormal matrices $~U_{\mu} = ~W~U_c$ and $~V_{\mu} = ~T~V_c$ along with the matrix $~\Sigma_{\mu} = ~\Sigma_c$ of singular values.  Therefore, building a local reduced basis $~U_{\mu}$ can be accomplished with the sequence of steps outlined in \Cref{alg:tucker_basis}.

\begin{algorithm}
\caption{HOSVD-Based Reduced Basis Generation}
\label{alg:tucker_basis}
\begin{algorithmic}[1]
\State \textbf{Input:} Snapshot tensor $~X \in \mathbb{R}^{N \times T \times P}$, Tucker ranks $(\bar{N}, \bar T, \bar P)$, generalized index $~e(~\mu)\in\mathbb{R}^{P}$, ROM dimension $n\ll N$.
\Statex \vspace{0.em} \hrulefill \ \textbf{Offline Stage} \ \hrulefill \vspace{0.2em}
\State Perform HOSVD \Cref{alg:hosvd} on $~X$, yielding $\tilde{~X}$ in terms of $~C\in\mathbb{R}^{\bar N\times \bar T\times \bar P}$ and column-orthonormal $~W\in \mathbb{R}^{N\times \bar N}$, $~T\in\mathbb{R}^{T\times\bar T}$, $~S\in\mathbb{R}^{P\times\bar P}$.
\Statex \vspace{0.em} \hrulefill \ \textbf{Online Stage} \ \hrulefill \vspace{0.2em}
\State Compute the thin SVD of $~C_{\mu} = ~C~S^\intercal~e(~\mu) \in \mathbb{R}^{\bar N\times\bar T}$ at test parameter $~\mu\in\mathbb{R}^p$, yielding $~U_c\in \mathbb{R}^{\bar{N}\times n}$, $~\Sigma_{c}\in\mathbb{R}^{n\times n}$, $~V_c\in\mathbb{R}^{\bar T\times n}$. 
\State \textbf{Output:} $~U_{\mu} = ~W~U_{c}$, $~\Sigma_{\mu} = ~\Sigma_c$, and $~V_{\mu} = ~T~V_c$ satisfying $\tilde{~X}_{\mu}~V_{\mu} = ~U_{\mu}~\Sigma_{\mu}$ (where $\tilde{~X}_{\mu} = ~W~C_{\mu}~T^\intercal$).
\end{algorithmic}
\end{algorithm}

Once the relatively expensive HOSVD factorization has been completed, the remaining steps of \Cref{alg:tucker_basis} are fast to execute for a given parameter instance $~\mu$.  In particular, lines 3 and 4 involve operations scaling with the Tucker ranks $\bar N,\bar T,\bar P$, which are much smaller than the original snapshot tensor dimensions.  Therefore, the Galerkin ROM analogous to \eqref{eq:ROM} becomes 
\begin{equation}\label{eq:MO-ROM}
    \dot{\hat{~x}}(t,~\mu) = ~U^\intercal_{\mu}~f\big( t, ~U_{\mu}\hat{~x}(t,~\mu),~\mu \big), \qquad \hat{~x}(0,~\mu) = ~U_{\mu}^\intercal~x_0(~\mu).
\end{equation}
Experiments in \cite{mamonov2022interpolatory,mamonov2024tensorial} demonstrated significant advantages of the ROM \eqref{eq:MO-ROM} over the corresponding monolithic basis formulation, particularly when PDE solutions vary nonlinearly with parameters. 
Additionally, note that \eqref{eq:MO-ROM} still retains the online speedup of the monolithic ROM \eqref{eq:ROM}: polynomial operators appearing in $~f$ can be pre-factored offline using the basis $~W$ coming from the HOSVD.  For example, consider the linear ROM $\dot{~x} = ~A~x$, so that the approximation $\tilde{~x}=~U_{\mu}\hat{~x}$ implies $\dot{\hat{~x}} = ~U_{\mu}^\intercal~A~U_{\mu}\hat{~x}$.  While directly computing the reduced linear operator requires an operation which scales with the FOM dimension $N$, it is also true that
\begin{equation*}
    \hat{~A}_{\mu} \coloneqq ~U_{\mu}^\intercal~A~U_{\mu} = ~U_c^\intercal(~W^\intercal~A~W)~U_c \eqqcolon ~U_c^\intercal\hat{~A}~U_c,
\end{equation*}
where $\hat{~A} = ~W^\intercal~A~W\in\mathbb{R}^{\bar N \times \bar N}$ can be pre-computed offline.  This makes the tensorial ROM highly efficient in practice, since the online adaptation scales only with the size of the Tucker ranks.

\subsection{Parameter Interpolation}
To complete the tensorial ROM pipeline described in \cite{mamonov2022interpolatory}, it remains to discuss how to construct the generalized index vector $~e(~\mu)\in\mathbb{R}^{P}$ used (formally) in generating the approximate snapshots $~X_{\mu}\approx ~X(~\mu)$.  There are a number of options for accomplishing this, based on considerations such as the dimensionality of the data and the structure of the parameter sampling.  We briefly review the two options suggested in \cite{mamonov2022interpolatory}.

\vspace{1pc}

\noindent \textbf{Barycentric Interpolation.}  A straightforward option, practical for low-dimensional parameter spaces, is given by the (piecewise-linear) barycentric interpolant of the training data.  Let $C \subset \mathbb R^p$ denote the convex hull of the given parameters $\{~\mu_s\}_{s=1}^{P}$ and consider the Delaunay triangulation of the set $\{~\mu_s\}$, each cell of which is a $p$-simplex with $p+1$ affinely independent vertices.  The barycentric coordinates
$(\lambda_0, \ldots, \lambda_{p})$ of a query parameter $~\mu$ are the solution to the affine system
$$
\begin{bmatrix}
~\mu_{s_0} & ~\mu_{s_1}  & \cdots & ~\mu_{s_p}\\
1 & 1 & \cdots & 1
\end{bmatrix}
\begin{bmatrix}
\lambda_0 \\ \lambda_1 \\ \vdots \\\lambda_p
\end{bmatrix} = 
\begin{bmatrix}
    ~\mu \\ 1
\end{bmatrix},
$$
where $\{~\mu_{s_0}, \ldots, ~\mu_{s_p}\}$ are the vertices of the (unique) simplex containing $~\mu$. In this case, a generalized index vector $~e(~\mu)$ can be defined as 
\begin{equation}\label{eq:gen-index-barycentric}
    ~e(~\mu) = \sum_{j=0}^p \lambda_j~e_{s_j}, 
\end{equation}
where $\{~e_s\}_{s=1}^{P}$ is the standard basis for $\mathbb{R}^{P}$.  Observe that all entries of the interpolation vector $~e(~\mu)$ are nonnegative and sum to one, leading to a strict interpolation of the snapshots in the tensor $~X$: if $~\mu=~\mu_s$ for some $1\leq i\leq N_s$, then $~e(~\mu) = ~e_s$ and $~X_{\mu} = ~X_{\mu_s}$.  However, this barycentric procedure quickly becomes expensive as the size of $p$ grows, since the cost of constructing the Delaunay triangulation scales with $\mathcal O\big(N_s^{\lceil p/2 \rceil}\big).$

\vspace{1pc}

\noindent \textbf{Mamonov/Olshanskii Generic Interpolation}
Another option for computing the generalized index $~e(~\mu)$ uses a parametric interpolant constructed via a weighted local least-squares procedure.  Given the matrix  $~Q\in\mathbb{R}^{p\times P}$ of training parameter samples from before, arranged so that $[~Q]_{is} = [~\mu_s]_i$,
along with a query point $~\mu\in\mathbb{R}^p$, consider the $K>p$ nearest neighbors $\{~\mu_{s_1},...,~\mu_{s_K}\}$ of $~\mu$.   Define the inverse distance matrix $~D = \mathrm{Diag}(~d)^{-1}\in\mathbb{R}^{K\times K}$ in terms of element-wise distances $[~d]_j = |~\mu - ~\mu_{s_j}|$, along with the local augmented parameter matrix $\bar{~Q}\in\mathbb{R}^{(p+1)\times K}$ whose $j^{th}$ column is $(~\mu_{s_j}^\intercal, 1)^\intercal$.  Then, solving the weighted minimum-norm problem
\begin{equation*}
    \argmin_{\bar{~a}=~D~a}\big|\bar{~Q}\bar{~a}-~c\big|^2,
\end{equation*}
for $~c = (~\mu^\intercal, 1)^\intercal$ yields a vector $\bar{~a} = ~D(\bar{~Q}~D)^\dagger~c\in\mathbb{R}^K$ of interpolation weights.  Supposing that the columns of $\bar{~Q}$ are ordered from $s_1,s_2,...,s_K$, this leads to a generalized index vector $~e(~\mu)$ defined by 
\begin{equation}\label{eq:gen-index-MO}
    ~e(~\mu) = \sum_{j=1}^K [\bar{~a}]_j~e_{s_j}.    
\end{equation}
Note that the last row of $\bar{~Q}$ and last entry of $~c$ ensure that the entries of $\bar{~a}$ sum to one, while the weighting enforced by $~D$ places more emphasis on parameters close to $~\mu$.  Moreover, no structure on the parameter domain $C\subset\mathbb{R}^P$ is necessary for computing \eqref{eq:gen-index-MO}, making this approach scalable and reasonably performant in high dimensions.

\section{Proposed Extensions}\label{sec:ours}
The tensorial ROM discussed in \Cref{sec:method} is simple to use and effective in a wide variety of parametric PDE scenarios.  However, it is limited in a couple of key ways.  For one, present technology requires that the reduced basis $~U_{\mu}\in\mathbb{R}^{N\times n}$ be Euclidean-orthonormal, leading to incompatibility with (or awkward use of) certain classes of structure-preserving ROMs.  To see this, consider for simplicity the standard heat equation $\dot{x} = \Delta x$ in terms of a function $x\in C^2(\mathcal{M})$ on some closed domain $\mathcal{M}$.  Important properties of this PDE, such as the monotonic energy decay (i.e., energy-stability) necessary for the stability of its discretizations, are guaranteed by its interpretation as an $L^2$-gradient flow, i.e., 
\begin{equation*}
    \dot{x} = -\mathrm{grad}_{L^2}\mathcal{D}(x), \qquad \mathcal{D}(x) = \int_\mathcal{M} |\nabla x|^2\,dV,
\end{equation*}
where $\mathcal{D}$ is known as the Dirichlet energy functional.  Expressing $x = \sum_i x_i\varphi_i$ in terms of a finite basis $\{\varphi_i\}_{i=1}^N \subset H^1$ of functions and testing against an arbitrary $y\in H^1$ yields the standard finite element discretization 
\begin{equation}\label{eq:heatFOM}
    (\dot{x},y)_{L^2} = -(\nabla x,\nabla y)_{L^2} \quad \Longleftrightarrow \quad ~M\dot{~x} = -~K~x, 
\end{equation}
where $~x\in\mathbb{R}^N$ is a vector of coefficients, $[~M]_{ij} = (\varphi_i,\varphi_j)_{L^2}$ is the mass matrix of the scheme, and $[~K]_{ij} = (\nabla\varphi_i,\nabla\varphi_j)_{L^2}$ is its stiffness matrix.  Importantly, the FOM \eqref{eq:heatFOM} retains the gradient flow structure of the continuous PDE, since 
\begin{equation}\label{eq:heatGF}
    \dot{~x} = -~M^{-1}~K~x = -~M^{-1}~D^\intercal~M~D~x = -~D^*~D~x = -\mathrm{grad}_M \big(|~D~x|_M^2\big),
\end{equation}
in terms of the $~M$-weighted norm $|~x|^2_M = ~x^\intercal~M~x$, the combinatorial gradient $~D$ (c.f. \cite{knill2013dirac}), and its $~M$-adjoint $~D^*=~M^{-1}~D^\intercal~M$.  On the other hand, making the ROM approximation $\tilde{~x}=~U\hat{~x} \approx ~x$ in terms of a Euclidean-orthonormal reduced basis $~U\in\mathbb{R}^{N\times n}$ and applying Galerkin projection leads to two possible and inequivalent reduced-order equations.  Depending on the FOM expression \eqref{eq:heatGF} or \eqref{eq:heatROM}, the ROM for the reduced state $\hat{~x}\in\mathbb{R}^{n}$ becomes either
\begin{equation}
    \dot{\hat{~x}} = -\widehat{~M^{-1}~K}\hat{~x}, \qquad {\rm or} \qquad \hat{~M}\dot{\hat{~x}} = -\hat{~K}\hat{~x},
\end{equation}
where $\widehat{~M^{-1}~K} = ~U^\intercal~M^{-1}~K~U$, $\hat{~M}=~U^\intercal~M~U$, and $\hat{~K}=~U^\intercal~K~U$ are reduced-order operators.  It turns out that the second equation admits the interpretation of a discrete gradient flow while the first does not: the latter equations are just $\dot{\hat{~x}} = -\mathrm{grad}_{\hat{M}}(|~D~U\hat{~x}|_M^2)$ in the reduced $\hat{~M}$-inner product, while the former cannot be expressed as $\dot{\hat{~x}} = -\mathrm{grad}_{\bar{M}}\hat{\mathcal{D}}(\hat{~x})$ for any functional $\hat{\mathcal{D}}$ of the reduced state and any symmetric and positive definite $\bar{~M}$.  Consequently, only one such ROM is energy-stable for every choice of reduced basis, and the user must know the distinction: directly projecting a structured FOM is not enough to produce a structure-preserving ROM.  
The first extension presented here addresses exactly this concern. 


\subsection{Weighted Orthonormal Tensorial Basis}

In the usual setting of Galerkin ROMs, there are established techniques for computing weighted orthonormal reduced bases $~U\in\mathbb{R}^{N\times n}$ \cite{daescu2008dual}, which simplify structural considerations and eliminate conditioning issues with reduced mass matrices. In particular, it is common for the reduced mass matrix $\hat{~M}$ to be poorly conditioned especially at large basis sizes $n$.  This can lead to significantly degraded performance, making it desirable to have a strategy for ensuring that the reduced basis $~U$ is mass-weighted, i.e., $~U^\intercal~M~U=~I$.  This becomes even more true for models with more complicated geometric structure, e.g. Hamiltonian models \cite{vijaywargiya2025tensor}, where $~M$-orthonormality can avoid a proliferation of $\hat{~M}$ factors and their difficult-to-compute inverses.  

The first contribution of this work is a similar procedure for constructing weighted orthonormal reduced bases $~U_{\mu}\in\mathbb{R}^{N\times n}$, nonlinearly calibrated to a particular parameter instance  $~\mu\in\mathbb{R}^p$, within the tensorial ROM framework.  First, we present a result showing the existence of an $~M$-weighted Tucker decomposition for any symmetric and positive definite $~M\in\mathbb{R}^{N\times N}$.  Note the following Lemma.

\begin{lemma}\label{lem:M-opt-vs-E-opt}
    Let $~X\in\mathbb{R}^{N\times T\times P}$ be a snapshot tensor, $~r = (\bar N, \bar T, \bar P)$ be a collection of Tucker ranks, $~M\in\mathbb{R}^{N\times N}$ be symmetric and positive definite, and $|~X|^2_M = \sum_{i,j,\alpha,s}[~M]_{ij}[~X]_{i\alpha s}[~X]_{j\alpha s}$ denote the $~M$-weighted Frobenius norm.  Consider the best rank-$~r$ reconstructions of $~X$ in the Frobenius norm and its $~M$-weighted counterpart:
    \begin{align*}
        ~X^F_{\rm opt} = \argmin_{{\rm rank}(\bar{~X})=~r} \big|~X-\bar{~X}\big|_F, \quad ~X^M_{\rm opt} = \argmin_{{\rm rank}(\bar{~X})=~r} \big|~X-\bar{~X}\big|_M.
    \end{align*}
    If $~M=~R^\intercal~R$ is a Cholesky factorization, then $(~R~X)^F_{\rm opt} = ~R~X^M_{\rm opt}$.

\end{lemma}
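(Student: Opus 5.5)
The plan is to show that the mode-1 action of $~R$ is an isometry from the $~M$-weighted Frobenius norm to the ordinary Frobenius norm, and that it maps the set of rank-$~r$ tensors bijectively onto itself. The claimed identity then follows by a change of variables in the minimization.

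Throughout, $~R~X$ denotes the mode-1 product, $[~R~X]_{k\alpha s} = \sum_i [~R]_{ki}[~X]_{i\alpha s}$, which is the convention implicit in the statement.

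\textbf{Step 1 (isometry).} For any $~Y\in\mathbb{R}^{N\times T\times P}$, expand
\begin{equation*}
|~R~Y|_F^2 = \sum_{k,\alpha,s}\Big(\sum_i [~R]_{ki}[~Y]_{i\alpha s}\Big)^2 = \sum_{i,j,\alpha,s}[~R^\intercal~R]_{ij}[~Y]_{i\alpha s}[~Y]_{j\alpha s} = |~Y|_M^2 .
\end{equation*}
Equivalently, in terms of the mode-1 unfolding, $|~R~Y_{(1)}|_F^2 = \mathrm{tr}\big(~Y_{(1)}^\intercal~M~Y_{(1)}\big)$. Applying this with $~Y = ~X-\bar{~X}$ and using linearity gives
\begin{equation*}
|~X-\bar{~X}|_M = |~R~X - ~R\bar{~X}|_F .
\end{equation*}

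\textbf{Step 2 (rank preservation).} $~R$ is invertible, being the Cholesky factor of an SPD matrix. If $\bar{~X}$ has Tucker form with factors $~W,~T,~S$ and core $~C$, then $~R\bar{~X}$ has factors $~R~W,~T,~S$ and the same core. Hence the mode-2 and mode-3 unfoldings have the same ranks as before. The mode-1 unfolding becomes $~R\bar{~X}_{(1)}$, which has the same rank as $\bar{~X}_{(1)}$ because $~R$ is invertible. Applying the same argument to $~R^{-1}$ shows that $\bar{~X}\mapsto~R\bar{~X}$ is a bijection of the set $\{{\rm rank}(\bar{~X})=~r\}$ onto itself. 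This holds whether rank means exact multilinear rank or rank at most $~r$.

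\textbf{Step 3 (change of variables).} Substituting $\bar{~Y} = ~R\bar{~X}$ and using Steps 1 and 2,
\begin{equation*}
\min_{{\rm rank}(\bar{~X})=~r}|~X-\bar{~X}|_M = \min_{{\rm rank}(\bar{~Y})=~r}|~R~X-\bar{~Y}|_F .
\end{equation*}
The minimizers therefore correspond: $\bar{~X}$ minimizes the left-hand side if and only if $~R\bar{~X}$ minimizes the right-hand side. In particular $(~R~X)^F_{\rm opt} = ~R~X^M_{\rm opt}$.

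\textbf{Main obstacle.} The delicate point is well-posedness of the $\argmin$. The best Tucker approximation is not unique in general, and the set of tensors of exact rank $~r$ is not closed. I would handle this in two ways:
\begin{itemize}
\item Interpret the equality as an identity between the solution sets, which is exactly what the bijection in Step 3 yields.
\item Note that the minimum exists when the constraint is taken as multilinear rank at most $~r$, because that set is closed (determinantal conditions on the unfoldings) and the objective is coercive.
\end{itemize}
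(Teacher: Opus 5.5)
Your proof is correct and is essentially the paper's argument. The paper relies on the same two facts: the isometry $|~Y|_M = |~R~Y|_F$, and that $\bar{~X}\mapsto~R\bar{~X}$ maps the rank-$~r$ tensors onto themselves (its ``since $~R$ has full rank''). It just packages them as two opposing inequalities between optimal values rather than as a single change of variables.

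Your bijection formulation is, if anything, more careful. It states the claim as an identity between solution sets, so it avoids the paper's tacit uniqueness assumption when it passes from equal optimal values to $(~R~X)^F_{\rm opt} = ~R~X^M_{\rm opt}$. The one step to tighten is the mode-2/3 rank claim in Step 2: ``same factors and core'' is not by itself a justification. Instead use $(~R\bar{~X})_{(2)} = \bar{~X}_{(2)}(~I_P\otimes~R)^\intercal$ with $~I_P\otimes~R$ invertible, and the analogous identity for mode 3.
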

\begin{proof}
    By the minimality of $~X^M_{\rm opt}$ and for any rank-$~r$ tensor $\bar{~Y}=~R\bar{~X}$, it follows that 
    \begin{align*}
        \big|~X-~X^M_{\rm opt}\big|_M = \big|~R~X-~R~X^M_{\rm opt}\big|_F \leq \big|~R~X-~R\bar{~X}\big|_F = \big|~R~X-\bar{~Y}\big|_F.
    \end{align*}
    Since $~R$ has full rank, minimizing over $\bar{~Y}$ yields $|~R~X-~R~X^M_{\rm opt}|_F \leq |~R~X-(~R~X)^F_{\rm opt}|_F$.  On the other hand, minimality of $(~R~X)^F_{\rm opt}$ guarantees the similar relationship
    \begin{align*}
        \big|~R~X-(~R~X)^F_{\rm opt}\big|_F \leq \big|~R~X-\bar{~Y}\big|_F = \big|~R~X-~R\bar{~X}\big|_F = \big|~X-\bar{~X}\big|_M,
    \end{align*}
    and minimizing over $\bar{~X}$ establishes the reverse inequality.  Therefore, $|~R~X-~R~X^M_{\rm opt}|_F = |~R~X-(~R~X)^F_{\rm opt}|_F$ and $(~R~X)^F_{\rm opt}=~R~X^M_{\rm opt}$ as desired.
\end{proof}

With this result in hand, the existence of an $~M$-weighted HOSVD decomposition can now be proven.



\begin{theorem}\label{thm:M-tucker}
    Let $~X\in\mathbb{R}^{N\times T\times P}$ be a tensor and let $~M\in\mathbb{R}^{N\times N}$ be a symmetric and positive definite matrix.
    Given fixed Tucker ranks $\bar{N}<N$, $\bar{T}< T$, and $\bar P < P$, there exists a HOSVD reconstruction  
    \begin{align*}
        \tilde{~{X}} = \sum_{i=1}^{\bar{N}}\sum_{\alpha=1}^{\bar{T}}\sum_{s=1}^{\bar{P}} [~{C}]_{i\alpha s}~{w}^i\otimes~{\tau}^\alpha\otimes~{\nu}^s \approx ~X,
    \end{align*}
    which satisfies the quasi-optimal error bound 
    \begin{align*}
        \big|~X-\tilde{~X}\big|_M \leq \sqrt{3}\big|~X-~X_{\rm opt}\big|_M,
    \end{align*}
    where $~X_{\rm opt}$ is the best Tucker rank-$(\bar{N},\bar{T},\bar{P})$ approximation to $~X$ in the $~M$-weighted Frobenius norm $|~X|^2_M = \sum_{i,j,\alpha,s}[~M]_{ij}[~X]_{i\alpha s}[~X]_{j\alpha s}$.
\end{theorem}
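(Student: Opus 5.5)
The plan is to transfer the problem to the Euclidean setting through the Cholesky factor, apply \Cref{thm:tucker} there, and pull the result back with \Cref{lem:M-opt-vs-E-opt}. Write $~M=~R^\intercal~R$ with $~R\in\mathbb{R}^{N\times N}$ invertible, and let $~R$ act on the first mode of $~X$, so that $[~R~X]_{i\alpha s}=\sum_j [~R]_{ij}[~X]_{j\alpha s}$. Expanding the definition gives, for any tensor $~Y$,
\begin{equation*}
|~Y|_M^2=\sum_{i,j,\alpha,s}[~R^\intercal~R]_{ij}[~Y]_{i\alpha s}[~Y]_{j\alpha s}=|~R~Y|_F^2 .
\end{equation*}
This identity is the only computation needed.

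\textbf{Step 1.} Apply \Cref{thm:tucker} to $~Y=~R~X$ with ranks $(\bar N,\bar T,\bar P)$. This yields a core $~C$, Euclidean-orthonormal sets $\{~v^i\}$, $\{~\tau^\alpha\}$, $\{~\nu^s\}$, and an approximation $\tilde{~Y}$ satisfying $|~R~X-\tilde{~Y}|_F\le\sqrt3\,|~R~X-(~R~X)^F_{\rm opt}|_F$.

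\textbf{Step 2.} Define $~w^i=~R^{-1}~v^i$ and
\begin{equation*}
\tilde{~X}=~R^{-1}\tilde{~Y}=\sum_{i,\alpha,s}[~C]_{i\alpha s}\,~w^i\otimes~\tau^\alpha\otimes~\nu^s .
\end{equation*}
The first-mode vectors then satisfy $(~w^i)^\intercal~M~w^j=(~v^i)^\intercal~v^j=\delta_{ij}$, so they are $~M$-orthonormal. This is the natural sense in which $\tilde{~X}$ is an ``$~M$-weighted HOSVD''. The statement leaves the orthonormality of the $~w^i$ unspecified, and the $~M$-orthonormal choice is what the later basis construction needs, so I would remark on this explicitly. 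The Tucker ranks are unchanged because $~R$ is invertible.

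\textbf{Step 3.} Using the norm identity, $|~X-\tilde{~X}|_M=|~R~X-\tilde{~Y}|_F\le\sqrt3\,|~R~X-(~R~X)^F_{\rm opt}|_F$. By \Cref{lem:M-opt-vs-E-opt}, $(~R~X)^F_{\rm opt}=~R~X^M_{\rm opt}$, so the right-hand side equals $\sqrt3\,|~R~X-~R~X_{\rm opt}|_F=\sqrt3\,|~X-~X_{\rm opt}|_M$, which is the claimed bound.

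The main obstacle is a subtlety in the lemma rather than any calculation. Best Tucker approximations need not be unique, and the constraint ``rank $=~r$'' should be read as ``multilinear rank at most $~r$''. For the norm equality only the equality of optimal values is required, and the lemma's proof already establishes that equality. So I would phrase Step 3 in terms of minimal values, which sidesteps both the uniqueness question and the question of whether the minimum is attained: bounded sets of Tucker tensors of rank at most $~r$ are closed, so the infimum is attained. I would also note that $~R$ acting on mode one preserves the set of tensors with rank at most $~r$ in both directions, which is what that proof implicitly uses.
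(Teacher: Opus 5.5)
Your proposal is correct and follows the paper's proof essentially step for step: factor $M = R^\intercal R$, apply the Euclidean HOSVD bound of \Cref{thm:tucker} to $RX$, pull the first-mode factor back by $R^{-1}$ to obtain an $M$-orthonormal basis, and use \Cref{lem:M-opt-vs-E-opt} to identify the two optimal errors. Your version is slightly tidier in two places: you write $\tilde{X} = R^{-1}\tilde{Y}$, whereas the paper's $R^{-\intercal}$ is a typo, since the norm identity needs $R\tilde{X} = \tilde{X}_R$; and you rely only on equality of the optimal values, not on uniqueness of the best approximation, which the lemma's final sentence tacitly assumes.
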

\begin{proof}
    Let $~M=~R^\intercal~R$ be the Cholesky factorization of $~M$.  Consider the Tucker decomposition of the weighted snapshots $~R~X$,
    \begin{align*}
        \tilde{~{X}}_R = \sum_{i=1}^{\bar{N}}\sum_{\alpha=1}^{\bar{T}}\sum_{s=1}^{\bar{P}} [~{C}]_{i\alpha s}\tilde{~{w}}^i\otimes~{\tau}^\alpha\otimes~{\nu}^s \approx ~R~X,
    \end{align*}
    defined in terms of a Euclidean-orthonormal basis $\tilde{~W}\in\mathbb{R}^{N\times\bar N}$ (containing $\{\tilde{~w}_i\}$) for the first dimension.  Solving the system $~R~W=\tilde{~W}$ for $~W$ yields an $~M$-orthonormal change-of-basis along with the decomposition 
    \begin{align*}
        \tilde{~{X}} = \sum_{i=1}^{\bar{N}}\sum_{\alpha=1}^{\bar{T}}\sum_{s=1}^{\bar{P}} [~{C}]_{i\alpha s}~{w}^i\otimes~{\tau}^\alpha\otimes~{\nu}^s = ~R^{-\intercal}\tilde{~X}_{R}.
    \end{align*}
    We claim that $\tilde{~X}$ is the desired decomposition.  To see this, recall that $~R~X_{\rm opt} = (~R~X)_{opt}$ by \Cref{lem:M-opt-vs-E-opt}.
    Therefore, it follows that
    \begin{align*}
        \big|~X-\tilde{~X}\big|_M &= \big|~R~X-\tilde{~X}_R\big|_F \leq \sqrt{3}\big|~R~X-(~R~X)_{opt}\big|_F \\ 
        &= \sqrt{3}\big| ~R~X-~R~X_{\rm opt}\big|_F = \sqrt{3}\big|~X-~X_{\rm opt}\big|_M,
    \end{align*}
    establishing the desired quasi-optimal error bound.
\end{proof}

\begin{remark}
    Repeating the argument leading to \eqref{eq:tucker_error} guarantees the reformulated error bound 
    \begin{equation}\label{eq:M-tucker_error}
        \big|~X-\tilde{~X}\big|_M \leq \varepsilon|~X|_M,
    \end{equation}
    where $\varepsilon = \Delta_M / |~X|_M$ and $\Delta_M$ denotes the sum of modal truncations of $~R~X$.
\end{remark}


\Cref{thm:M-tucker} shows that an $~M$-orthonormal HOSVD can be obtained by first applying \Cref{alg:hosvd} to the weighted snapshots $~R~X$, and then re-weighting the resulting basis $\tilde{~W}\in\mathbb{R}^{N\times\bar N}$ by $~R^{-1}$.  It is straightforward to check that the resulting sequence of steps in \Cref{alg:tucker_basis} preserves $~M$-orthonormality, yielding a reduced basis $~U_{\mu}\in\mathbb{R}^{N\times n}$ satisfying $~U_{\mu}^\intercal~M~U_{\mu}=~I$.  In particular, observe that $~U_{\mu} = ~W~U_c$ remains $~M$-orthonormal when $~U_c\in\mathbb{R}^{\bar{N}\times n}$ is Euclidean-orthonormal, since $~U_{\mu}^\intercal~M~U_{\mu} = ~U_c^\intercal~W^\intercal~M~W~U_c = ~U_c^\intercal~U_c = ~I$.  For convenience, the steps of this construction are summarized in \Cref{alg:M_tucker_basis}.

\begin{algorithm}
\caption{Weighted HOSVD-Based Reduced Basis Generation}
\label{alg:M_tucker_basis}
\begin{algorithmic}[1]
\State \textbf{Input:} Snapshot tensor $~X \in \mathbb{R}^{N \times T \times P}$, SPD matrix $~M\in\mathbb{R}^{N\times N}$, Tucker ranks $(\bar{N}, \bar T, \bar P)$, generalized index $~e(~\mu)\in\mathbb{R}^{P}$, ROM dimension $n\ll N$.
\Statex \vspace{0.em} \hrulefill \ \textbf{Offline Stage} \ \hrulefill \vspace{0.2em}
\State Compute the Cholesky decomposition $~M=~R^\intercal~R$.
\State Perform HOSVD \Cref{alg:hosvd} on $~R~X$, yielding $\tilde{~X}_R$ in terms of $~C\in\mathbb{R}^{\bar N\times \bar T\times \bar P}$ and column-orthonormal $\tilde{~W}\in \mathbb{R}^{N\times \bar N}$, $~T\in\mathbb{R}^{T\times\bar T}$, $~S\in\mathbb{R}^{P\times\bar P}$.
\State Solve $~R~W = \tilde{~W}$ for the $~M$-orthonormal basis $~W\in\mathbb{R}^{N\times \bar{N}}$.
\State Compute $\tilde{~X} = \sum_{i,\alpha,s}[~C]_{i\alpha s}~w^i\otimes~\tau^\alpha\otimes~\nu^s$ in terms of $~C,~W,~T,~S$.
\Statex \vspace{0.2em} \hrulefill \ \textbf{Online Stage} \ \hrulefill \vspace{0.2em}
\State Compute the thin SVD of $~C_{\mu} = ~C~S^\intercal~e(~\mu) \in \mathbb{R}^{\bar N\times\bar T}$ at test parameter $~\mu\in\mathbb{R}^p$, yielding $~U_c\in \mathbb{R}^{\bar{N}\times n}$, $~\Sigma_{c}\in\mathbb{R}^{n\times n}$, $~V_c\in\mathbb{R}^{\bar T\times n}$.
\State \textbf{Output:} $~U_{\mu} = ~W~U_{c}$, $~\Sigma_{\mu} = ~\Sigma_c$, and $~V_{\mu} = ~T~V_c$ satisfying $\tilde{~X}_{\mu}~V_{\mu} = ~U_{\mu}~\Sigma_{\mu}$ (where $\tilde{~X}_{\mu} = ~W~C_{\mu}~T^\intercal$).
\end{algorithmic}
\end{algorithm}


\subsection{Interpolation with Gaussian Radial Basis Functions}

\Cref{alg:M_tucker_basis} enables the use of non-Euclidean orthonormalized reduced bases in the tensorized ROM, greatly simplifying application of the structure-preserving techniques used in the experiments (c.f. \Cref{sec:numerics}).  However, this is not enough to achieve acceptable performance in all cases of practical interest, particularly when parameter sampling is high-dimensional, unstructured, and sparse.  To further improve the tensorial ROM in these instances, we now present an alternative to the interpolation techniques discussed in \Cref{sec:method}.  Employing local (or global) Gaussian radial basis functions (GRBFs), the proposed approach produces smooth local (or global) interpolants which better capture curvature information across the sparsely sampled parameter space.  Moreover, these interpolants can be stably and efficiently constructed even when the number of neighbors $K \ll p$ is much smaller than the dimension of the parameter space.  

Note that the case of sparse sampling across the parameter space is not artificial.  The primary goal of a ROM is often to enable many-query analysis of expensive simulations, where snapshot collection is inherently limited.  This can substantially weaken the distance-weighted least-squares strategy of \cite{mamonov2022interpolatory}, which may pull the interpolation artificially toward the mean when all training samples are far away from the query parameter.  In addition, the weighted least-squares strategy cannot effectively detect local extrema or other curvature information across the parameter space, leading to decreased accuracy when this information is important.  Conversely, a GRBF-based interpolation produces a non-polynomial profile that varies smoothly throughout parameter space, automatically incorporating parametric nonlinearities with high regularity.

To describe this procedure in detail, consider the collection 
$~Q \in \mathbb{R}^{p\times P}$ of training samples and a query point $~\mu \in\mathbb{R}^p$ as before. A local GRBF-based weight vector $~e(~\mu)$ can be constructed by first selecting $K$ nearest neighbors $\{~\mu_{s_1},\dots,~\mu_{s_K}\}$ of $~\mu$ and a radial Gaussian kernel $\phi_{\varepsilon}(r) = \exp[-(\varepsilon r)^2]$ with shape parameter $\varepsilon>0$. Given these data, a local kernel matrix $~K_{\varepsilon}\in\mathbb{R}^{K\times K}$ and evaluation vector $~k_{\varepsilon}(~\mu)\in\mathbb{R}^K$ are defined by 
\begin{align*}
    [~K_{\varepsilon}]_{jk} = \phi_{\varepsilon}\big(|~\mu_{s_j}-~\mu_{s_k}|\big), \qquad [~k_{\varepsilon}(~\mu)]_j = \phi_{\varepsilon}\big(|~\mu-~\mu_{i_j}|\big).
\end{align*}
The local GRBF weight vector $~e_{\text{loc}}(~\mu)\in\mathbb{R}^K$ associated with the interpolant is then obtained as $~e_{\text{loc}}(~\mu) = ~\Phi^{-1}~k_{\varepsilon}(~\mu)$. Finally, this local weight vector can be embedded into a global weight vector $~e(~\mu)\in\mathbb{R}^P$ by defining
\begin{equation}\label{eq:gen-index-RBF}
[~e(~\mu)]_j = \begin{cases}
    [~e_{\text{loc}}(~\mu)]_k & j = s_k \in \{s_1,\dots,s_K\}, \\
    0 & {\rm otherwise}.
\end{cases}
\end{equation}
It follows that any scalar quantity sampled at the training parameters is interpolated at $~\mu$ as a weighted sum using $~e(~\mu)$, similar to the previous constructions \eqref{eq:gen-index-barycentric} and \eqref{eq:gen-index-MO}.  On the other hand, \eqref{eq:gen-index-RBF} has the advantage of (locally) increased regularity, offering potential gains in representational power.  It will be shown in \Cref{sec:numerics} that this leads to substantially better performance in cases of practical interest. 

\section{Error estimation}\label{sec:analysis}

Before exercising the improvements outlined in \Cref{sec:ours}, it is important to verify that these changes to the tensorial ROM do not negatively impact its theoretical assurances.  To that end, we now present a representation error estimate, along with a bound on the error in the Galerkin ROM \eqref{eq:MO-ROM},
where $~U_{\mu}$ is the output of \Cref{alg:M_tucker_basis} with generalized index \eqref{eq:gen-index-RBF}.  First, recall the snapshot tensor $~X\in\mathbb{R}^{N\times T\times P}$ and its mode-one matricization $~X_{(1)}\in\mathbb{R}^{N\times TP}$.  Given a Cholesky factorization $~M = ~R^\intercal~R$ for some mass matrix $~M$, the thin SVD $~R~X_{(1)}\bar{~V} =\bar{~U}~\Sigma$ defines an $~M$-orthonormal monolithic basis $~U$ through $~R~U=\bar{~U}$.  It follows that the in-sample projection error satisfies 
\begin{equation}\label{eq:proj-error-mono}
    \big|~P^\perp_{U}~X\big|^2_{M} = \big|~P^\perp_{U}~X_{(1)}\big|^2_{M} = \sum_{i=n+1}^{\bar{R}} \sigma_i^2,
\end{equation}
where $\bar R$ is the rank of the matrix $~X_{(1)}$, $\{\sigma_i\}$ are the entries of ${~\Sigma}$, and $~P_{U}^\perp = ~I-~U~U^*$ denotes $~M$-orthogonal projection onto the complement of ${\rm im}\,~U$. Here, $~U^* = ~U^\intercal~M$ denotes the $~M$-adjoint of $~U$, defined by the equality ($\hat{~x}\in\mathbb{R}^n$ and $~y\in\mathbb{R}^N$)
\[\langle~U\hat{~x},~y\rangle_{M} = \langle\hat{~x},~U^*~y\rangle_M.\]
This shows that the approximation error of the basis $~U$ is controllable in $~M$-norm: by choosing an appropriately large $n$, the projection error \eqref{eq:proj-error-mono} can be made arbitrarily small.   


The first goal is to provide a similar guarantee for the local projection error in the basis $~U_{\mu}$ used in the tensorial ROM \eqref{eq:MO-ROM}.  While this is straightforward when $~\mu$ is contained in the training set, the more general (and useful) case requires the following mild assumptions on the snapshot interpolation scheme, also employed in \cite{mamonov2022interpolatory}.  Let $\Omega\subset\mathbb{R}^P$ denote a compact parameter domain.  
\begin{assumption}\label{assump:stability}
    The interpolation process $~\mu\mapsto~e(~\mu)$ is stable for all $~\mu\in\Omega$, i.e., there exists a constant $C_e>0$ independent of $~\mu$ such that 
    \[\sup_{~\mu\in\Omega} \big|~e(~\mu)\big|\leq C_e. \]
\end{assumption}
\begin{assumption}\label{assump:bounded-error}
    The error in the interpolation process is bounded, i.e., there exists a constant $\delta>0$ (depending on $\Omega$) such that 
    \[\sup_{~\mu\in\Omega} \big|g(~\mu) - ~e(~\mu)^\intercal~g(~S)\big| \leq |g|_{C^p(\Omega)}\delta,\]
    for any function $g\in C^p(\Omega)$. 
\end{assumption}


It will also be useful to have a ``dictionary'' converting between the geometry induced by $~M$ and the usual Euclidean geometry, established by the following result.

\begin{proposition}\label{prop:M-isom}
    Consider the Cholesky factorization $~M=~R^\intercal~R$.  The map $~A\mapsto ~R~A~R^{-1}$ provides an isometric isomorphism between the general linear group $GL(N, \mathbb{R})$ equipped with the (Euclidean) spectral norm $|~A|_2$ and the same group equipped with the $~M$-spectral norm, 
    \begin{equation*}
        |~A|_{M,2} = \sup_{|~x|_M\neq 0} \frac{|~A~x|_M}{|~x|_M}.
    \end{equation*}
\end{proposition}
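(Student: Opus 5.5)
The plan is to verify the two properties separately. First I will show that conjugation by $~R$ is a bijective group homomorphism of $GL(N,\mathbb{R})$. Then I will check that it transports the $~M$-spectral norm onto the Euclidean spectral norm. Everything rests on the identity $|~x|_M^2 = ~x^\intercal~R^\intercal~R~x = |~R~x|^2$, which says that $~R$ is an isometry from $(\mathbb{R}^N,|\cdot|_M)$ onto $(\mathbb{R}^N,|\cdot|)$.

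\textbf{Algebraic part.} Let $\Phi(~A) = ~R~A~R^{-1}$. Since $~M$ is symmetric positive definite, its Cholesky factor $~R$ is invertible, so $\Phi(~A)$ is invertible whenever $~A$ is, with $\Phi(~A)^{-1} = ~R~A^{-1}~R^{-1}$. Hence $\Phi$ maps $GL(N,\mathbb{R})$ into itself. It is multiplicative, because $\Phi(~A~B) = ~R~A~R^{-1}~R~B~R^{-1} = \Phi(~A)\Phi(~B)$, and it fixes the identity. It is a bijection with explicit inverse $\Phi^{-1}(~B) = ~R^{-1}~B~R$. So $\Phi$ is a group automorphism, and it is also linear on the ambient matrix space.

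\textbf{Isometry.} Fix $~A$ and substitute $~y = ~R~x$, which ranges over all nonzero vectors as $~x$ does. Then
\begin{equation*}
    \frac{|~A~x|_M}{|~x|_M} = \frac{|~R~A~x|}{|~R~x|} = \frac{|~R~A~R^{-1}~y|}{|~y|}.
\end{equation*}
Taking the supremum over $~x\neq 0$, equivalently over $~y\neq 0$, gives $|~A|_{M,2} = |~R~A~R^{-1}|_2 = |\Phi(~A)|_2$. So $\Phi$ is an isometry from the $~M$-spectral normed group onto the Euclidean one. Its inverse $~B\mapsto ~R^{-1}~B~R$ is an isometry in the opposite direction, as the statement is phrased. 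By linearity, distances $|~A-~B|_{M,2} = |\Phi(~A)-\Phi(~B)|_2$ are preserved as well.

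\textbf{Main obstacle.} There is no substantive obstacle. The only care needed is bookkeeping about the direction of the map, since the statement pairs the Euclidean norm with the domain. I will therefore state the identity $|~A|_{M,2} = |~R~A~R^{-1}|_2$ explicitly and note that either $\Phi$ or $\Phi^{-1}$ realizes the isometric isomorphism, depending on which space is taken as the domain. I will also remark that the norm identity holds for all matrices, not only invertible ones, which is what later error estimates will use.
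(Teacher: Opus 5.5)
Your proposal is correct and follows essentially the same route as the paper. Conjugation by $R$ is a bijective group homomorphism, and the substitution $y = Rx$ gives $|A|_{M,2} = |RAR^{-1}|_2$. Your added care about the direction of the map, and your remark that the norm identity holds for arbitrary (not necessarily invertible) matrices, are worthwhile. The paper later applies the resulting Cauchy--Schwarz bound to the non-invertible projector $P^\perp_{U_\mu}$.
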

\begin{proof}
    The homomorphism property follows immediately: for $~A,~B\in GL(N,\mathbb{R})$,
    \[(~R^{-1}~A~R)(~R^{-1}~B~R) = ~R^{-1}~A(~R~R^{-1})~B~R = ~R^{-1}(~A~B)~R.\]
    Similarly, injectivity and surjectivity follow directly from the from the full-rankedness of $~R$.  To establish the isometry property, observe that, 
    \[|~A|_{M,2} = \sup_{|~x|_M\neq 0} \frac{|~A~x|_M}{|~x|_M} = \sup_{|~R~x|\neq 0} \frac{|~R~A~x|}{|~R~x|} = \sup_{|~y|\neq 0} \frac{|~R~A~R^{-1}~y|}{|~y|} = |~R~A~R^{-1}|_2, \]
    where $~y=~R~x$.
\end{proof}


\Cref{prop:M-isom} provides a direct link between the $~M$-norm and its Euclidean counterpart, leading to analogues of basic inequalities such as that below.

\begin{corollary}\label{cor:M-CS}
    Let $~A,~B\in\mathbb{R}^{N\times N}$ and $~b\in\mathbb{R}^N.$
    The following Cauchy-Schwarz inequalities hold: $|~A~b|_M \leq |~A|_{M,2}|~b|_M$ and $|~A~B|_M\leq |~A|_{M,2}|~B|_M$.
\end{corollary}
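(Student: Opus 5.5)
The plan is to move everything into Euclidean geometry with the Cholesky factor $~R$ and then quote the standard Euclidean inequalities. The first step is the identity $|~b|_M = |~R~b|$ for vectors. For matrices (and, column by column, for tensors) the weighted Frobenius norm satisfies $|~B|_M = |~R~B|_F$. This follows from the definition $|~X|_M^2=\sum[~M]_{ij}[~X]_{i\alpha s}[~X]_{j\alpha s}$ together with $~M=~R^\intercal~R$. \Cref{prop:M-isom} supplies the remaining piece, $|~A|_{M,2}=|~R~A~R^{-1}|_2$. Although that proposition is phrased on $GL(N,\mathbb{R})$, the supremum computation in its proof never uses invertibility of $~A$, so the identity holds for every $~A\in\mathbb{R}^{N\times N}$.

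For the vector inequality, insert $~R^{-1}~R$ between $~A$ and $~b$:
\[
|~A~b|_M = |~R~A~b| = |(~R~A~R^{-1})(~R~b)| \leq |~R~A~R^{-1}|_2\,|~R~b| = |~A|_{M,2}|~b|_M ,
\]
using the definition of the Euclidean operator norm. Alternatively, this is just the definition of $|~A|_{M,2}$ as a supremum, applied to $~x=~b$; the case $~b=~0$ is trivial.

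For the matrix inequality, write $~B=[~b_1,\dots,~b_N]$ by columns. Then $|~A~B|_M^2 = \sum_k |~A~b_k|_M^2$, which is at most $|~A|_{M,2}^2\sum_k|~b_k|_M^2 = |~A|_{M,2}^2|~B|_M^2$ by the vector case; take square roots. Equivalently, $|~R~A~B|_F = |(~R~A~R^{-1})(~R~B)|_F \leq |~R~A~R^{-1}|_2|~R~B|_F$ by the Euclidean spectral–Frobenius inequality.

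The only point needing care is checking that the column-wise decomposition of $|\cdot|_M$ matches the paper's weighted Frobenius definition. Expanding $\sum_{ij}[~M]_{ij}B_{ik}B_{jk}=~b_k^\intercal~M~b_k$ settles this. Beyond that the argument is routine.
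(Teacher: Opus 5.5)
Your proof is correct and follows essentially the paper's argument: you conjugate by the Cholesky factor $~R$ to get the vector bound from the Euclidean operator-norm inequality, then apply that bound column by column to get the matrix case. Two details of yours actually tighten the paper's write-up: you write the column step as $|~A~B|_M^2=\sum_k|~A~b_k|_M^2$, which is the correct identity where the paper writes unsquared norms, and you observe that $|~A|_{M,2}=|~R~A~R^{-1}|_2$ does not need $~A\in GL(N,\mathbb{R})$, which covers the corollary's hypothesis $~A\in\mathbb{R}^{N\times N}$ even though \Cref{prop:M-isom} is stated only for invertible $~A$.
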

\begin{proof}
For the first inequality,  observe that 
\[|~A~b|_M = \big|(~R~A~R^{-1})(~R~b)\big|\leq \big|~R~A~R^{-1}\big||~R~b| = |~A|_{M,2}|~b|_M,\]
where the inequality is due to the usual (Euclidean) Cauchy-Schwarz.  The second now follows quickly: letting $~b_j\in\mathbb{R}^N$ denote the columns of $~B$,
\[|~A~B|_M = \sum_{j=1}^N |~A~b_j|_M \leq |~A|_{M,2}\sum_{j=1}^N |~b_j|_M = |~A|_{M,2}|~B|_M. \qedhere \]
\end{proof}

With these notions in place, it follows that the representation error of the HOSVD basis from \Cref{alg:M_tucker_basis} can be bounded in an interpretable fashion. 


\begin{proposition}\label{prop:representation-error}
    Under the assumptions above, the representation error of $~U_{\mu}$ at any $~\mu\in\Omega$ is bounded as 
    \[\big|~P_{U_{\mu}}^\perp~X(~\mu)\big|_M^2 \leq C_e^2\varepsilon^2|~X|_M^2 + \delta^2|~X|_{M\times C^p(\Omega)}^2 + \sum_{i=n+1}^R\sigma_{\mu,i}^2,\]
    in terms of the error in the Tucker factorization (first term), parameter interpolation (second term), and local SVD (third term).  Here, $|~X|_{M\times C^p(\Omega)} = \sum_{i,j,\alpha}[~M]_{ij}|x^i(t_{\alpha},\cdot)|_{C^p(\Omega)}|x^j(t_{\alpha},\cdot)|_{C^p(\Omega)}$ and $\{\sigma_{\mu,i}\}$ denote the entries of $~\Sigma_{\mu}$.
\end{proposition}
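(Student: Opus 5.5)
The plan is to split the full snapshot matrix $~X(~\mu)\in\mathbb{R}^{N\times T}$ at the query parameter into three pieces and bound each separately:
\[
~X(~\mu) = \big(~X(~\mu) - ~X~e(~\mu)\big) + \big(~X - \tilde{~X}\big)~e(~\mu) + \tilde{~X}_{\mu},
\]
where $\tilde{~X}_{\mu} = \tilde{~X}~e(~\mu) = ~W~C_{\mu}~T^\intercal$ is the interpolated Tucker reconstruction from \Cref{alg:M_tucker_basis}.

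The projector $~P^\perp_{U_{\mu}} = ~I - ~U_{\mu}~U_{\mu}^*$ is $~M$-orthogonal, since $~U_{\mu}^\intercal~M~U_{\mu} = ~I$. Hence $|~P^\perp_{U_{\mu}}|_{M,2}\le 1$. To see this, use \Cref{prop:M-isom}: $~R~P^\perp_{U_\mu}~R^{-1} = ~I - \bar{~U}\bar{~U}^\intercal$ with $\bar{~U}=~R~U_\mu$ Euclidean-orthonormal, and this is a Euclidean orthogonal projector. Applying $~P^\perp_{U_\mu}$ to each piece and using \Cref{cor:M-CS} then bounds the first two pieces by their unprojected $~M$-norms.

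\textbf{Tucker term.} Write $(~X-\tilde{~X})~e = \sum_s e_s(~X-\tilde{~X})_{:,:,s}$. Apply $~R$ and use Cauchy--Schwarz in $s$ (Euclidean in the weight vector, Frobenius over slices):
\[
|(~X-\tilde{~X})~e|_M \le |~e|\,|~X-\tilde{~X}|_M \le C_e\varepsilon|~X|_M,
\]
using \Cref{assump:stability} and \eqref{eq:M-tucker_error}.

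\textbf{Interpolation term.} Apply \Cref{assump:bounded-error} entrywise to $g = x^i(t_\alpha,\cdot)$. This gives a defect vector $~d_\alpha$ with $|[~d_\alpha]_i|\le\delta|x^i(t_\alpha,\cdot)|_{C^p}$. Then
\[
|~d_\alpha|_M^2=\sum_{ij}M_{ij}d^id^j ,
\]
and this is to be bounded by $\delta^2\sum_{ij}M_{ij}|x^i|_{C^p}|x^j|_{C^p}$, whose sum over $\alpha$ is $\delta^2|~X|^2_{M\times C^p(\Omega)}$.

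\textbf{Local SVD term.} Since $~R\tilde{~X}_\mu = \tilde{~W}~C_\mu~T^\intercal$ has singular values $\sigma_{\mu,i}$ and $~U_\mu$ spans its leading $n$ left singular vectors in the $~M$-geometry, \eqref{eq:proj-error-mono} applied locally gives
\[
|~P^\perp_{U_\mu}\tilde{~X}_\mu|_M^2=\sum_{i>n}\sigma_{\mu,i}^2 .
\]

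\textbf{Combining.} The target bound has squared terms simply summed, with no cross terms or factor 3. So I would not use the triangle inequality plus $(a+b+c)^2\le 3(a^2+b^2+c^2)$. Instead I would try to argue orthogonality. The SVD remainder $~P^\perp_{U_\mu}\tilde{~X}_\mu$ lies in $\mathrm{im}\,~W$, in the $~M$-complement of $~U_\mu$. The Tucker residual can be taken $~M$-orthogonal to $\mathrm{im}\,~W$ in its first mode, as is standard for HOSVD residuals, and this survives multiplication by $~e$. That separates the SVD term from the Tucker term; whatever interaction remains with the interpolation defect must be handled explicitly. If clean orthogonality fails for the interpolation piece, I would accept a Pythagorean-type bound up to a constant, as in \cite{mamonov2022interpolatory}.

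\textbf{Main obstacles.} I expect two. The first is this orthogonality/cross-term step, in particular the interaction between the interpolation defect and the other two pieces. The second is the sign issue in the interpolation term: bounding $\sum M_{ij}d^id^j$ by the same expression with $|x^i|_{C^p}|x^j|_{C^p}$ in place of $d^id^j$ needs care, since $~M$ may have negative entries. If no direct argument works, I would fall back on $|~d_\alpha|_M^2\le|~M|_2|~d_\alpha|^2$ or simply interpret the $M\times C^p$ seminorm accordingly.
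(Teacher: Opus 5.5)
Your decomposition and the three individual estimates match the paper's proof: $|~P^\perp_{U_\mu}|_{M,2}\le 1$ (your conjugation by $~R$ is correct), Cauchy--Schwarz over slices with $|~e(~\mu)|\le C_e$, the entrywise use of the interpolation assumption, and the exact identity $|~P^\perp_{U_\mu}\tilde{~X}_\mu|_M^2=\sum_{i>n}\sigma_{\mu,i}^2$. The gap is the combining step, and the orthogonality you propose for it is false.

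The HOSVD residual is \emph{not} $~M$-orthogonal to ${\rm im}\,~W$ in mode one. Write $\tilde{~X}=~X\times_1~W~W^*\times_2~T~T^\intercal\times_3~S~S^\intercal$ using mode-$k$ products. Then the residual contains $~W~W^*\big(~X-~X\times_2~T~T^\intercal\times_3~S~S^\intercal\big)$, whose mode-one fibers lie inside ${\rm im}\,~W$. Only the part $(~I-~W~W^*)~X$ is orthogonal. The residual is orthogonal to the full Tucker subspace ${\rm im}\,~W\otimes{\rm im}\,~T\otimes{\rm im}\,~S$, but that does not help. Pairing $(~X-\tilde{~X})~e$ with $~W~A~T^\intercal$ means pairing the residual with $~W~A~T^\intercal\otimes~e$, and this lies in that subspace only if $~e\in{\rm im}\,~S$. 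That generally fails, already for $~e=~e_i$ at training points. A direct computation gives
\[
\big\langle ~P^\perp_{U_\mu}(~X-\tilde{~X})~e,\;~P^\perp_{U_\mu}\tilde{~X}_\mu\big\rangle_M=\big\langle ~W^*\big(~X(~I-~S~S^\intercal)~e\big)~T,\;(~I-~U_c~U_c^\intercal)~C_\mu\big\rangle_F ,
\]
where $~X~v$ denotes contraction in the parameter mode. This cross term has no sign.

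The paper does not supply the missing idea either. It simply writes $|a+b|_M^2\le|a|_M^2+|b|_M^2$ twice, calling it the triangle inequality, and the bound as stated is in fact false. Take $~M=~I$, $N=T=3$, $P=2$, Tucker ranks $(2,2,1)$, $n=1$, and slices $~X~e_1=\mathrm{diag}(7,6,0)/\sqrt5$, $~X~e_2=\mathrm{diag}(9,2,0)/\sqrt5$. Modes one and two are captured exactly, and $~S=(1,1)^\intercal/\sqrt2$. Hence:
\begin{itemize}
\item $\tilde{~X}~e_1=\mathrm{diag}(8,4,0)/\sqrt5$ and $|~X-\tilde{~X}|_F^2=\Delta^2=2$;
\item $~U_{\mu_1}$ spans the first coordinate axis and $\sum_{i>1}\sigma_{\mu_1,i}^2=16/5$;
\item yet $|~P^\perp_{U_{\mu_1}}~X~e_1|_F^2=36/5>2+16/5$.
\end{itemize}
This contradicts the paper's Case 1 chain. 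It also contradicts the proposition itself. Use $\Omega=[0,1]$, $\mu_1=0$, $\mu_2=1$, linear weights $~e(\mu)=(1-\mu,\mu)^\intercal$ (so $C_e=1$, and $\delta=1/2$ for the first-derivative seminorm), and the affine family $~X(\mu)=(1-\mu)~X~e_1+\mu~X~e_2$. The right-hand side is then $2+\tfrac14\cdot 4+16/5=31/5<36/5$. So no argument can give the statement as written. Your fallback is the correct endpoint: $|~P^\perp_{U_\mu}~X(~\mu)|_M\le C_e\varepsilon|~X|_M+\delta|~X|_{M\times C^p(\Omega)}+\big(\sum_{i>n}\sigma_{\mu,i}^2\big)^{1/2}$, with the interpolation term fixed as below. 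Squaring gives the stated bound with a factor $3$.

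Your sign worry is also justified, and the paper ignores it too. Its proof bounds $\sum_{i,j}[~M]_{ij}d^id^j$ by $\sum_{i,j}[~M]_{ij}|d^i||d^j|$. This fails for SPD $~M$ with negative entries, which typical higher-order Lagrange or N\'ed\'elec mass matrices have. For example, $~M=\begin{pmatrix}1&-1/2\\-1/2&1\end{pmatrix}$ and $~d=(1,-1)^\intercal$ give $3>1$. Either of your fixes is valid: the bound $\lambda_{\max}(~M)\,\delta^2\sum_{i,\alpha}|x^i(t_\alpha,\cdot)|^2_{C^p(\Omega)}$, or $|[~M]_{ij}|$ in place of $[~M]_{ij}$ in the definition of $|~X|_{M\times C^p(\Omega)}$. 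Either one changes the statement.
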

\begin{proof}
    The argument is analogous to the proof of \cite[Theorem 4.1]{mamonov2022interpolatory}.  Consider the $~M$-orthonormal Tucker factorization 
    \begin{align*}
        \tilde{~{X}} = \sum_{i=1}^{\bar{N}}\sum_{\alpha=1}^{\bar{N}_t}\sum_{s=1}^{\bar{N}_s} [~{C}]_{i\alpha s}~{w}^i\otimes~{\tau}^\alpha\otimes~{\nu}^s \approx ~X,
    \end{align*}
    which defines the core tensor $~C\in\mathbb{R}^{\bar N\times\bar T\times\bar P}$ and the basis matrices $~W\in \mathbb{R}^{N\times \bar N}$, $~T\in\mathbb{R}^{T\times\bar T}$, $~S\in\mathbb{R}^{P\times\bar P}$ satisfying $~W^\intercal~M~W=~I$, $~T^\intercal~T=~I$, and $~S^\intercal~S=~I$.  There are two cases to consider:

    \paragraph{Case 1: $~\mu$ in snapshot set}
    If $~\mu$ corresponds to one of the snapshots in the training set, it is straightforward to extract the local snapshot matrix $~X_{\mu}\in\mathbb{R}^{N\times T}$ and its approximation $\tilde{~X}_{\mu}\in\mathbb{R}^{N\times T}$: if $~\mu = ~\mu_i$ is the $i^{\rm th}$ parameter sample, simply form $~X_{\mu} = ~X~e_i$ and $\tilde{~X}_{\mu} = \tilde{~X}~e_i$ where $~e_i = (0,...0,1,0,...,0) \in \mathbb{R}^{N_s}$ is the unit vector with $1$ in the $i^{th}$ position.  Defining the local $~M$-orthonormal basis $~U_{\mu}$ through $\tilde{~X}_{\mu}~V_{\mu} = ~U_{\mu}~\Sigma_{\mu}$ using \Cref{alg:M_tucker_basis}, it follows that the projection error satisfies 
\begin{equation*}
\begin{split}
    \big|~P^\perp_{U_{\mu}}~X_{\mu}\big|_M^2 &\leq \big|~P^\perp_{U_{\mu}}\big(~X_{\mu}-\tilde{~X}_{\mu}\big)\big|_M^2 + \big|~P^\perp_{U_{\mu}}\tilde{~X}_{\mu}\big|_M^2 \leq \big|~P^\perp_{U_{\mu}}\big|_{M,2}^2\big|(~X-\tilde{~X})~e_i\big|_M^2 + \big|~P^\perp_{U_{\mu}}\tilde{~X}_{\mu}\big|^2_M \\
    &\leq \big|~X-\tilde{~X}\big|_M^2 + \sum_{i=n+1}^{R}\sigma_{\mu,i}^2 \leq \varepsilon^2|~X|_M^2 + \sum_{i=n+1}^{R}\sigma_{\mu,i}^2,
\end{split}
\end{equation*}
where $R$ is the rank of $\tilde{~X}_{\mu}$.  Here, the first inequality is the triangle inequality, the second is Cauchy-Schwarz \Cref{cor:M-CS}, and the third uses that $~P_{U_\mu}^\perp$ is $~M$-orthogonal projection along with the energy criterion \eqref{eq:proj-error-mono}. 

\paragraph{Case 2: $~\mu$ not in snapshot set}
In the case that $~\mu$ is not contained in the training snapshots, obtaining the corresponding error bound 
requires more effort.  
First, let $~X(~\mu)$ denote the inaccessible high-fidelity solution at $~\mu$, and let $~X_{\mu},\tilde{~X}_{\mu}$ denote the local snapshot matrices constructed as in the previous case, with $~e_i$ replaced by the generalized index vector $~e(~\mu)$.  By the previous argument, it follows that  
\begin{equation*}
    \big|~P_{U_{\mu}}^\perp~X(~\mu)\big|_M^2 \leq \big|~P_{U_{\mu}}^\perp\big(~X(~\mu)-\tilde{~X}_{\mu}\big)\big|_M^2 + \big|~P_{U_{\mu}}^\perp\tilde{~X}_\mu\big|_{M}  \leq \big|~X(~\mu)-\tilde{~X}_{\mu}\big|^2_M + \sum_{i=n+1}^R \sigma_{\mu,i}^2,
\end{equation*}
since the basis $~U_{\mu}$ is constructed from the SVD of $\tilde{~X}_{\mu}$.  The first term can be split in two:
\begin{equation*}
    \big|~X(~\mu)-\tilde{~X}_{\mu}\big|^2_M \leq \big|~X(~\mu)-~X_{\mu}\big|^2_M + \big|~X_{\mu}-\tilde{~X}_{\mu}\big|_M^2.
\end{equation*}
The second term is controllable in view \Cref{thm:M-tucker} and the stability \Cref{assump:stability},
\begin{equation*}
    \big|~X_{\mu}-\tilde{~X}_{\mu}\big|_M^2 = \big|(~X-\tilde{~X})~e(~\mu)\big|_M^2 \leq C_e^2\big|~X-\tilde{~X}\big|_M^2 \leq C_e^2\varepsilon^2|~X|^2_M.
\end{equation*}
Similarly, the first term is controllable via the interpolation \Cref{assump:bounded-error},
\begin{align*}
    \big|~X(~\mu)-~X_{\mu}\big|^2_M &= \sum_{i,j,\alpha}[~M]_{ij}\big(x^i(t_{\alpha},~\mu) - ~e(~\mu)^\intercal x^i(t_{\alpha},~S)\big)\big(x^j(t_{\alpha},~\mu) - ~e(~\mu)^\intercal x^j(t_{\alpha},~S)\big) \\
    &\leq \sum_{i,j,\alpha}[~M]_{ij}\big|x^i(t_{\alpha},~\mu) - ~e(~\mu)^\intercal x^i(t_{\alpha},~S)\big|\big|x^j(t_{\alpha},~\mu) - ~e(~\mu)^\intercal x^j(t_{\alpha},~S)\big| \\
    &\leq \delta^2 \sum_{i,j,\alpha}[~M]_{ij}\big|x^i(t_{\alpha},\cdot)\big|_{C^p(\Omega)}\big|x^j(t_{\alpha},\cdot)\big|_{C^p(\Omega)} = \delta^2\nn{~X}^2_{M\times C^p(\Omega)}
\end{align*}
Putting this together yields the desired error bound
\begin{equation*}
    \big|~P_{U_{\mu}}^\perp~X(~\mu)\big|_M^2 \leq C_e^2\varepsilon^2|~X|^2_M + \delta^2|~X|^2_{M\times C^p(\Omega)} + \sum_{i=n+1}^R \sigma_{\mu,i}^2. \qedhere
\end{equation*}
\end{proof}

\Cref{prop:representation-error} shows that there are three components to the representation error of the local basis $~U_{\mu}$.  Namely, reconstruction error from the HOSVD, interpolation error from the parameter sampling, and truncation error from the local SVD.  These must be balanced for an effective model reduction,  but controlling representation error alone is not 
enough for a satisfactory ROM.  Previous work \cite{mamonov2025priori} has shown that the tensorial ROM \eqref{eq:MO-ROM} with basis \Cref{alg:tucker_basis} has controllable error in the special case of time-dependent parabolic PDEs.  The next goal is to establish a similar guarantee for the $~M$-orthonormal basis \Cref{alg:M_tucker_basis} given any Lipschitz-continuous right-hand side $~f$.  First, note the following assumption.


\begin{assumption}\label{assump:lipschitz-f}
    The function $~f:\mathbb{R}^N\to\mathbb{R}^N$ defining the FOM $\dot{~x}=~f(~x)$ is $~M$-Lipschitz continuous on the domain $K$ of $~x$, i.e., there exists a constant $L_f>0$ such that 
    \[|~f(~x)-~f(~y)|_M \leq L_f|~x-~y|_M, \qquad \forall\,~x,~y\in K.\]
    Equivalently, the function $~R~f$ is Lipschitz continuous where $~M=~R^\intercal~R$.
\end{assumption}

\begin{remark}
    \Cref{assump:lipschitz-f} is clearly satisfied by any Lipschitz continuous $~f$, since $$|~R~f(~x)-~R~f(~y)| \leq |~R|_{2}|~f(~x)-~f(~y)| = \lambda_{\rm max}(~M)^{1/2}|~f(~x)-~f(~y)|.$$
\end{remark}

A natural norm measuring the ROM error is the $~M$-weighted $L^2$ error in time, 
\[\nn{~x(~\mu)}_M^2 = \int_0^\tau |~x(t,~\mu)|^2_M\,dt, \]
where $\tau>0$ denotes the final time under consideration.  With this in hand, there is the following estimate on the error in the proposed ROM.  

\begin{theorem}\label{thm:ROM-error}
    Consider an autonomous FOM $\dot{~x}(~\mu) = ~f(~x(~\mu))$ with $~f$ satisfying \Cref{assump:lipschitz-f} and a corresponding Galerkin ROM $\dot{\hat{~x}}(~\mu) = ~U_{\mu}^*~f(~U_{\mu}\hat{~x}(~\mu))$ defined by the local $~M$-orthonormal basis $~U_{\mu}$.  Then, the ROM error at any $~\mu\in\Omega$ satisfies 
    \[\nn{~x(~\mu) - ~U_{\mu}\hat{~x}(~\mu)}_M \leq \big(1 + L_f C(\tau)\sqrt{\tau}\big)\big\|~P_{U_{\mu}}^\perp~x(~\mu)\big\|_M,\]
    in terms of constants $L_f$ and $C(\tau)$, along with the representation error $~P_{U_{\mu}}~x(~\mu)$.
\end{theorem}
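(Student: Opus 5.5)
Split the error $\mathbf{x}-\mathbf{U}_{\mu}\hat{\mathbf{x}}$ into a projection part and a ROM-dynamics part. Since $\mathbf{U}_{\mu}$ is $\mathbf{M}$-orthonormal, $\mathbf{P}_{U_\mu}=\mathbf{U}_{\mu}\mathbf{U}_{\mu}^*$ is an $\mathbf{M}$-orthogonal projector. Writing
\[
\mathbf{x}-\mathbf{U}_{\mu}\hat{\mathbf{x}} = \mathbf{P}^\perp_{U_\mu}\mathbf{x} + \mathbf{U}_{\mu}\mathbf{e}, \qquad \mathbf{e}\coloneqq \mathbf{U}_{\mu}^*\mathbf{x}-\hat{\mathbf{x}}\in\mathbb{R}^n,
\]
the triangle inequality (or Pythagoras in $\mathbf{M}$) in the $L^2(0,\tau;\mathbf{M})$ norm gives $\|\mathbf{x}-\mathbf{U}_{\mu}\hat{\mathbf{x}}\|_M\le \|\mathbf{P}^\perp_{U_\mu}\mathbf{x}\|_M+\|\mathbf{U}_{\mu}\mathbf{e}\|_M$. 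Moreover $|\mathbf{U}_{\mu}\mathbf{e}|_M=|\mathbf{e}|$ by $\mathbf{M}$-orthonormality. This produces the leading "$1$" in the constant, and reduces the task to showing $\|\mathbf{e}\|\le L_fC(\tau)\sqrt{\tau}\,\|\mathbf{P}^\perp_{U_\mu}\mathbf{x}\|_M$.

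Next, derive an evolution equation for $\mathbf{e}$. Apply $\mathbf{U}_{\mu}^*$ to the FOM and subtract the ROM:
\[
\dot{\mathbf{e}}=\mathbf{U}_{\mu}^*\big(\mathbf{f}(\mathbf{x})-\mathbf{f}(\mathbf{U}_{\mu}\hat{\mathbf{x}})\big).
\]
The initial error vanishes, since $\hat{\mathbf{x}}(0)=\mathbf{U}_{\mu}^*\mathbf{x}_0$ under the $\mathbf{M}$-weighted analogue of the ROM initial condition; hence $\mathbf{e}(0)=0$. The map $\mathbf{U}_{\mu}^*=\mathbf{U}_{\mu}^\intercal\mathbf{M}$ is a contraction from $|\cdot|_M$ to the Euclidean norm, because $|\mathbf{U}_{\mu}^*\mathbf{y}|=|\mathbf{P}_{U_\mu}\mathbf{y}|_M\le|\mathbf{y}|_M$ (via \Cref{prop:M-isom}/\Cref{cor:M-CS}, $|\mathbf{P}_{U_\mu}|_{M,2}\le1$). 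Combining this with \Cref{assump:lipschitz-f} gives
\[
|\dot{\mathbf{e}}|\le L_f|\mathbf{x}-\mathbf{U}_{\mu}\hat{\mathbf{x}}|_M\le L_f\big(|\mathbf{P}^\perp_{U_\mu}\mathbf{x}|_M+|\mathbf{e}|\big).
\]
Here we assume both trajectories remain in the domain $K$.

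Then apply Gr\"onwall. Using $\tfrac{d}{dt}|\mathbf{e}|\le|\dot{\mathbf{e}}|$ and $\mathbf{e}(0)=0$,
\[
|\mathbf{e}(t)|\le L_f\int_0^t e^{L_f(t-s)}|\mathbf{P}^\perp_{U_\mu}\mathbf{x}(s)|_M\,ds\le L_f e^{L_f\tau}\sqrt{\tau}\,\|\mathbf{P}^\perp_{U_\mu}\mathbf{x}\|_M
\]
by Cauchy--Schwarz in time. Squaring and integrating over $[0,\tau]$ contributes another factor $\sqrt{\tau}$, giving $\|\mathbf{e}\|\le L_f\,(e^{L_f\tau}\sqrt{\tau})\sqrt{\tau}\,\|\mathbf{P}^\perp_{U_\mu}\mathbf{x}\|_M$. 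This has exactly the claimed form with $C(\tau)=\sqrt{\tau}e^{L_f\tau}$; a sharper $C(\tau)$ could come from a more careful convolution (Young) estimate, but that is unnecessary.

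The main obstacle is handling the nonsmooth $|\mathbf{e}|$ in Gr\"onwall. It is cleaner to work with $\tfrac12\tfrac{d}{dt}|\mathbf{e}|^2=\mathbf{e}^\intercal\dot{\mathbf{e}}$, or with the integral form $\mathbf{e}(t)=\int_0^t\dot{\mathbf{e}}$ followed by the integral Gr\"onwall inequality, which avoids differentiability issues at $\mathbf{e}=0$. A secondary point is checking that $\mathbf{U}_{\mu}^*$ and the initial condition are used consistently with the $\mathbf{M}$-orthonormal basis, so that $\mathbf{e}(0)=0$. If the ROM used $\mathbf{U}_{\mu}^\intercal\mathbf{x}_0$ instead, an extra initial-error term would appear, and the bound would need $\mathbf{U}^*$ throughout.
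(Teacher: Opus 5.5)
Your proof is correct and follows the paper's argument: your $\mathbf{U}_{\mu}\mathbf{e}$ is the paper's $\mathbf{y}=\mathbf{P}_{U_\mu}\mathbf{x}-\mathbf{U}_{\mu}\hat{\mathbf{x}}$ written in reduced coordinates (with $|\mathbf{U}_{\mu}\mathbf{e}|_M=|\mathbf{e}|$), and the Gr\"onwall step with zero initial error, Cauchy--Schwarz in time, and a second time integration giving the factor $\sqrt{\tau}$ are the same as in the paper. The differences are minor: you apply the Lipschitz bound once to $\mathbf{x}-\mathbf{U}_{\mu}\hat{\mathbf{x}}$ instead of splitting through $\mathbf{f}(\mathbf{P}_{U_\mu}\mathbf{x})$, you state that $\mathbf{e}(0)=\mathbf{0}$ requires $\hat{\mathbf{x}}(0)=\mathbf{U}_{\mu}^*\mathbf{x}_0$ where the paper simply asserts $\mathbf{y}(0)=\mathbf{0}$, and your $C(\tau)=\sqrt{\tau}\,e^{L_f\tau}$ is a cruder upper bound for the paper's $\sqrt{(e^{2L_f\tau}-1)/(2L_f)}$.
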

\begin{proof}
    First, write $~x=~x(~\mu)$ and observe that the error decomposes:
    \begin{equation*}
        ~x - ~U_{\mu}\hat{~x} = (~x - ~P_{U_{\mu}}~x) + (~P_{U_{\mu}}~x - ~U_{\mu}\hat{~x}) = ~P_{U_{\mu}}^\perp~x + ~y,
    \end{equation*}
    in terms of the $~M$-orthogonal projection $~P_{U_{\mu}}~x = ~U~U^*~x$.  
    The $~M$-norm of the first term is bounded by the work above.  The second term can be bounded with Gronwall.  More precisely, the
    $~M$-norm of the derivative $\dot{~y}$ satisfies
    \[|\dot{~y}|_{M} = \big|~P_{U_\mu}\dot{~x} - ~U_{\mu}\dot{\hat{~x}}\big|_M \leq \big|~P_{U_\mu}\big|_M \big|~f(~x)-~f(~U_{\mu}\hat{~x})\big|_M \leq \big|~f(~x)-~f(~U_{\mu}\hat{~x})\big|_M, \]
    where \Cref{cor:M-CS} was used in the first inequality, and the second equality used that $~P_{U_\mu}$ is an $~M$-orthogonal projection matrix.
    Applying the triangle inequality along with \Cref{assump:lipschitz-f} and the fact that the time derivative of the norm $\partial_t|~y|_M = (2|~y|_M)^{-1}\langle\dot{~y},~y\rangle_M \leq |\dot{~y}|_M$ is less than the norm of the time derivative yields a first-order differential equation for $|~y|_M$:
    \[\partial_t|~y|_M \leq |\dot{~y}|_M \leq \big| ~f(~x)-~f(~P_{U_{\mu}}~x) \big|_M + \big| ~f(~P_{U_{\mu}}~x)-~f(~U_{\mu}\hat{~x}) \big|_M \leq L_{f}\big(\big|~P_{U_\mu}^\perp~x\big|_M + |~y|_M\big). \]
    Using positivity of the integrating factor $e^{-L_{f}t}$, it follows that 
    \[\partial_t\big(e^{-L_{f}t}|~y|_M\big) = e^{-L_{f}t}\big(\partial_t|~y|_M - L_{f}|~y|_M\big) \leq e^{-L_{f}t}L_{f}\big|~P_{U_{\mu}}^\perp~x\big|_M, \]
    and therefore integration in view of the condition $~y(0)=~0$ implies
    \begin{align*}
        |~y|_M &\leq L_{f}\int_{0}^t e^{L_{f}(t-s)}\big|~P_{U_{\mu}}^\perp~x(s)\big|_M\, ds \leq L_{f}\int_0^\tau e^{L_{f}(\tau-s)}\big|~P_{U_{\mu}}^\perp~x(s)\big|_M\, ds \\
        &\leq L_{f}\left(\int_{0}^\tau e^{2L_{f}(\tau-s)}\, ds\right)^{\frac{1}{2}}\left(\int_{0}^\tau \big|~P_{U_{\mu}}^\perp~x(s)\big|_M^2\, ds\right)^{\frac{1}{2}} \leq C L_f\big\|~P_{U_{\mu}}^\perp~x\big\|_M,
    \end{align*}
    where $C = \|e^{2L_{f}(\tau-\cdot)}\| = \sqrt{(e^{2L_{f}\tau}-1)/(2L_{f})} $ is a constant of integration.
    Integrating again in time then yields 
    \[\|~y\|_M \leq L_fC(\tau)\sqrt{\tau} \big\|~P_{U_{\mu}}^\perp~x\big\|_M, \]
    and therefore the desired ROM error is bounded as
    \[ \|~x(~\mu) - ~U_{\mu}\hat{~x}(~\mu)\|_M \leq \big\|~P_{U_{\mu}}^\perp~x\big\|_M + \|~y\|_M \leq \big(1 + L_fC(\tau)\sqrt{\tau}\big)\big\|~P_{U_{\mu}}^\perp~x\big\|_M. \qedhere \]
\end{proof}

\Cref{thm:ROM-error} ensures that the modifications to the tensorial ROM \eqref{eq:MO-ROM} proposed in \Cref{sec:ours} produce a model with controlled error provided the system under consideration is first-order and the right-hand side $~f$ is well behaved.  Moreover, this guarantee persists regardless of the character of the PDE being discretized.  Note that similar error bounds have appeared in the ROM literature in many places, e.g., \cite{chaturantabut2012state,GONG2017780,Gruber:2023,gruber2024vc,geng2024gradopinf}, although no previous tensorial ROM strategy has supplied such a bound.

\section{Numerical results}\label{sec:numerics}

Now that the structure-aware tensorial ROM has been introduced and its theoretical properties have been examined, it remains to compare its performance to previous model reduction strategies.  This section evaluates the proposed methodology on three parametric PDE problems of increasing complexity: a 2D scalar heat equation, a 2D scalar wave equation, and a full 3D Maxwell system. In addition, each benchmark problem compares three distinct approaches to constructing the reduced bases needed for ROM deployment:

\begin{enumerate}
    \item \textbf{Monolithic SVD}: a single, parameter-independent
        $~M$-orthonormal basis $~U\in\mathbb{R}^{N\times n}$ obtained from the truncated SVD of the matricized snapshot tensor $~X_{(1)}\in\mathbb{R}^{N\times TP}$.  This is the ``default'' model reduction strategy commonly employed for parametric problems involving mass matrices. 

    \item \textbf{HOSVD with Mamonov/Olshanskii (MO) interpolation}: the structure-aware $~M$-orthonormal basis $~U_{\mu}\in\mathbb{R}^{N\times n}$ generated by \Cref{alg:M_tucker_basis} with the distance-weighted least-squares interpolation scheme \cref{eq:gen-index-MO} from \cite{mamonov2022interpolatory}.  This represents a modest improvement on the current state-of-the-art in tensorial ROM strategies, where the structure-aware basis generation technique introduced in this work is combined with previous technology.

    \item \textbf{HOSVD with Gaussian Radial Basis Function interpolation}: the structure-aware $~M$-orthonormal basis $~U_{\mu}\in\mathbb{R}^{N\times n}$ generated by \Cref{alg:M_tucker_basis} with the proposed GRBF interpolation of \Cref{sec:ours}.  This is the full structure-aware tensorial ROM strategy proposed in this work, with the choice of a global GRBF interpolant to promote smoothness across the parameter space.  
\end{enumerate}

Since the Monolithic SVD is parameter-independent, it produces a single, global basis from
the full snapshot data and does not adapt to any query parameter $~\mu\in\mathbb{R}^p$.  Therefore, its performance is a reference for the ROM accuracy achievable without basis adaptivity.  Conversely, both HOSVD strategies are nonlinear across parameter space, enabling improved performance at somewhat increased computational overhead.  Note that all experiments report errors with respect to the $~M$-weighted Frobenius norm which bounds the $L^1([0,\tau],L^2(\Omega))$ inner product, defined for a discrete trajectory $~Q\in\mathbb{R}^{N\times T}$ as $\|~Q\|_M = \sqrt{\sum_{\alpha} ~q_\alpha^\intercal ~M ~q_\alpha}$, where $~q_\alpha$ denotes the $\alpha$-th column of $~Q$. Code for reproducing the numerical experiments is available in the repository: \url{https://github.com/arjunveejay/TuckerROMs}.

\subsection{Heat equation}

As a first demonstration, consider the forced heat equation with uniform diffusion coefficient:
\begin{equation}\label{eq:heat}
    \left\{\begin{aligned}
        &\frac{\partial}{\partial t}y(~ x,t)
    = \Delta y(~ x,t) + f(~ \mu; ~ x, t), \qquad t\in[0,\tau], \quad  &&~x \in \Omega, \\
    & y(~ x,t) = 0, \hspace{4.05cm} t\in[0,\tau], \quad && ~ x \in \partial \Omega, \\
    & y(~{x},0) = 0, \quad && ~ x \in \Omega,
    \end{aligned}\right.
\end{equation}
where $\Omega = [0,2\pi]^2$ is a two-dimensional domain with homogeneous Dirichlet boundary conditions.  Here,
$f$ is a prescribed, time-varying forcing term parameterized by $~\mu = [\mu_1, \mu_2, \mu_3]^\intercal$,
\begin{align*}
    f(~\mu; ~ x, t) =  \mu_1 \exp{\left(-\frac{(x-\mu_2)^2+(y-\mu_3)^2}{2\sigma^2}\right)}
    \sin\!\left(\frac{x}{2}\right)\sin\!\left(\frac{y}{2}\right) e^{-t}.
\end{align*}
The spatial component of this forcing is the product of two terms: a fixed-width Gaussian ($\sigma=0.4$), centered at $(\mu_2, \mu_3)$ with amplitude $\mu_1$, along with a sinusoidal term ensuring compatibility with the boundary conditions.  Its temporal component is simple exponential decay guaranteeing a reduction in forcing over time.  

The FOM for \cref{eq:heat} is constructed through the continuous Galerkin finite-element method.  Given a triangulation 
$\Omega_h$ of the domain, consider the $H^1$-conforming finite-element space
\begin{align*}
    V_h := \{ v_h \in H^1(\Omega) \; : \; v_h |_{K} \in \mathrm P_1(K), \; \forall K \in \Omega_h \; ; \; v_h|_{\partial \Omega} = 0 \},
\end{align*}
where $\mathrm P_1(K)$ is the space of linear polynomials on element $K$.  The weak Galerkin formulation of \cref{eq:heat} reads: find $q_h \in V_h$ such that
\begin{align}\label{eq:weakheat}
    (\dot q_h, v_h)_{\Omega_h} = -(\nabla q_h, \nabla v_h)_{\Omega_h} + (f(~\mu), v_h)_{\Omega_h} \qquad \forall\, v_h \in V_h,
\end{align}
where $(\cdot, \cdot)_{\Omega_h}$ denotes the $L^2$ inner product on $\Omega_h$.  Expanding in the nodal basis $\{\phi_i\}$ of $V_h$ yields the matrix-vector system
\begin{align}\label{eq:matvecheat}
    ~M \dot{~q}(t) = ~A ~q (t) + ~f(~\mu; t), \qquad ~q_0 = ~q(0) = ~0.
\end{align}
where $~q\in\mathbb{R}^N$ collects the nodal degrees of freedom, $~M$ is the mass matrix, $~A$ is the (negative) stiffness matrix, and $~f$ denotes the load vector with components
\begin{align*}
    f_i(~\mu, t) = (f(~\mu; \cdot, t), \phi_i)_{\Omega_h}.
\end{align*}
The system \eqref{eq:matvecheat} provides the data source used to generate the snapshots $[~X]_{i\alpha s} = [~q(t_\alpha,~\mu_s)]_i$ that serve as training data for the reduced bases.

The corresponding Galerkin ROM is formed via straightforward projection.  Given a reduced basis $~U\in\mathbb{R}^{N\times n}$, this means substituting the approximation $\tilde{~q} = ~U \hat{~q} \approx ~q$ into \cref{eq:matvecheat}, where $\hat{~q}\in\mathbb{R}^n$ is a vector of coefficients. Testing the resulting expression against $~U$ yields the system
\begin{align}\label{eq:heatROMfull}
    ~U^\intercal ~M ~U \dot{\hat{~q}}(t) = ~U^\intercal ~A ~U \hat{~q}(t) + ~U^\intercal ~f(~\mu, t), \qquad \hat{~q}_0 = ~0.
\end{align}
When the basis $~U$ is $~M$-orthonormal, the reduced mass matrix $~U^\intercal ~M ~U = ~I$ simplifies to the identity, and \cref{eq:heatROMfull} simplifies to
\begin{align}\label{eq:heatROM}
    \dot{\hat{~q}}(t) = \hat{~A}\, \hat{~q}(t) + \hat{~f}(~\mu, t), \qquad \hat{~q}_0 = ~0,
\end{align}
where $\hat{~A} = ~U^\intercal ~A ~U$ and $\hat{~f} = ~U^\intercal ~f$.  
In the absence of forcing, this guarantees that the ROM \eqref{eq:heatROM} is a discrete gradient flow, as discussed in \Cref{sec:ours}.  This is the reduced system that is integrated in all experiments below.

\subsubsection{Experimental details}
The FEM discretization \eqref{eq:matvecheat} yields a heat system with  
$N=1082$ degrees of freedom that 
is integrated in time with the implicit Euler method. The interval of integration is $[0, \pi]$ and the step-size is $\Delta t = \pi/1200$, producing $T = 1201$ snapshots per parameter instance.  The amplitude $\mu_1$ is sampled uniformly from $(0,1)$ and the center $(\mu_2, \mu_3)$ uniformly from $\Omega=[0,2\pi]^2$, for a total of $P = 200$ samples split into 160 training and 40 testing instances.

The training snapshots form the tensor $~X \in\mathbb{R}^{1082\times 1201 \times 160}$.  Applying \Cref{alg:M_tucker_basis} with Tucker ranks $(N, T, P) = (120, 120, 120)$ yields a relative $~M$-weighted representation error of $3.25 \times 10^{-4}$. For a given test parameter $~\mu$, the generalized index vector $~e(~\mu)$ is constructed using both the MO interpolant with $15$ nearest neighbors and the RBF interpolant with shape parameter $\varepsilon=1$; a rank-$r$ basis $~U_\mu$ is then extracted from the local core matrix $~C_\mu$ as described in \Cref{alg:M_tucker_basis}. The monolithic baseline is obtained from the thin SVD of the column-wise matricization $~X_{(1)}\in\mathbb{R}^{N\times T P}$ (appropriately weighted), retaining the leading $~M$-orthonormal left singular vectors. 

\begin{figure}[ht]
  \centering
  \includegraphics[width=0.6\linewidth]{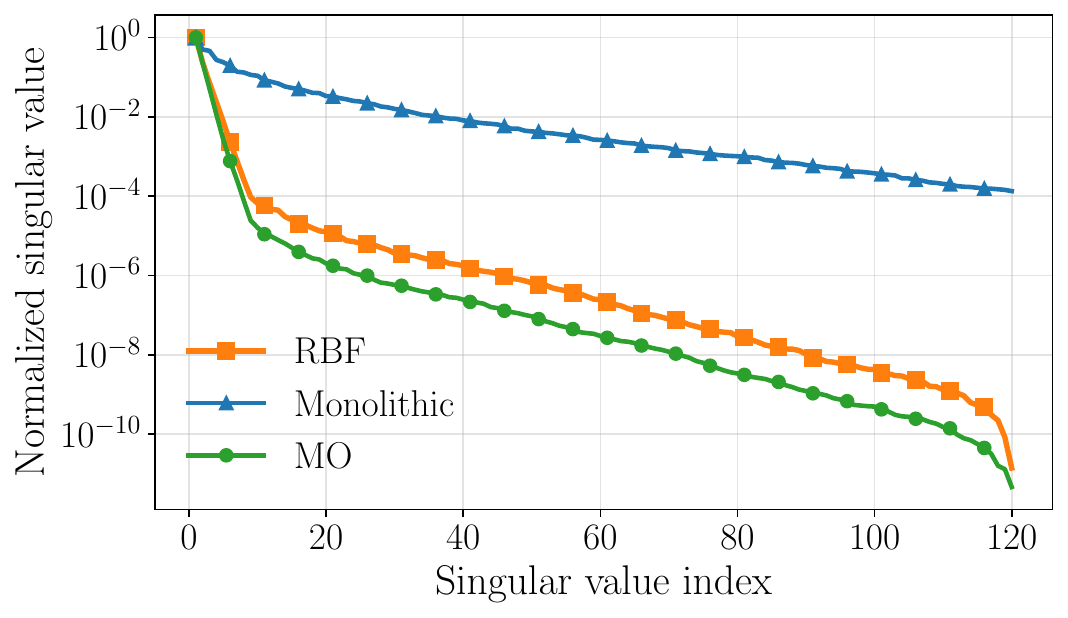}
  \caption{Normalized singular value decay for the three bases $~U = ~U_{\mu}$ compared in the heat equation experiment, with the tensorial HOSVD bases constructed at a randomly selected test parameter. 
  Observe that the HOSVD bases exhibit significantly faster singular value decay due to their local adaptation.}\label{fig:sv_heat}
\end{figure}

\begin{figure}[ht]
    \centering
    \includegraphics[width=\linewidth]{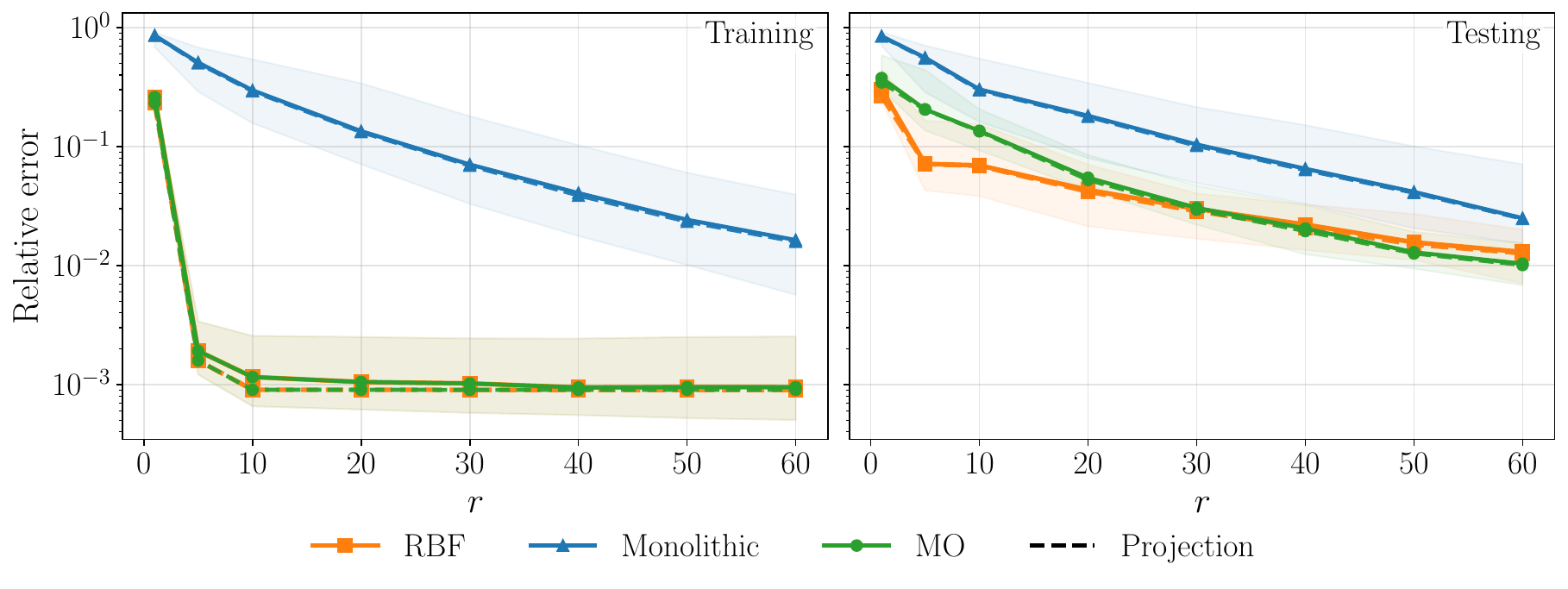}
    \caption{Relative $~M$-weighted error in the ROM solutions to the heat system as a function of reduced basis dimension $r$, considered over all training (left) and testing (right) parameters.  For each method, solid lines denote the median ROM error across parameter instances while dashed lines denote the median projection error.  Shaded bands indicate the interquartile range of the ROM error.}\label{fig:errors_heat}
\end{figure}

\begin{figure}[ht]
  \centering

  \includegraphics[width=0.95\linewidth]{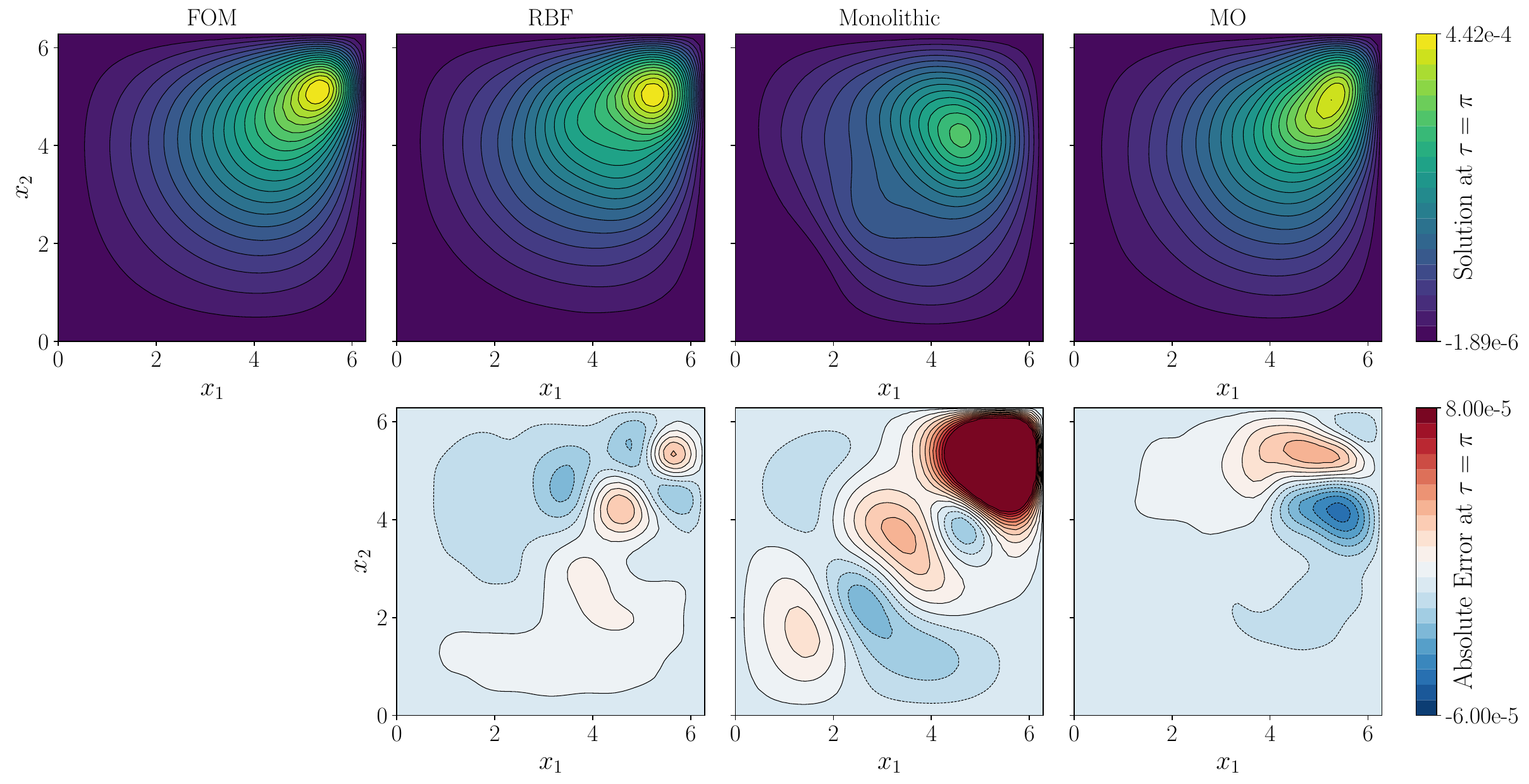}

  \caption{Top row: FOM and ROM solutions to the heat system at terminal time $\tau=\pi$ for a sample test parameter with reduced state dimension $r=10$.  Bottom row: pointwise signed error in each ROM solution. While the RBF and MO solutions meaningfully approximate the FOM solution, the monolithic ROM exhibits significant distortion, dramatically underpredicting the maximum value of the solution.}\label{fig:solution_heat}
\end{figure}

\Cref{fig:sv_heat} shows the normalized singular value decay of the three bases for a representative test parameter.  Both Tucker-based bases exhibit significantly faster decay than the monolithic SVD basis, with their normalized singular values falling below $10^{-5}$ within the first 20 modes.  The slower decay of the monolithic basis reflects the need to simultaneously represent solutions across all training parameters with a single global basis. Conversely, the tensorial bases locally and nonlinearly adapt to each query parameter, naturally circumventing the slow Kolmogorov $n$-width decay in parameter space.

\Cref{fig:errors_heat} reports the median relative $~M$-weighted ROM error (solid lines) and median projection error (dashed lines) of each approach as a function of the reduced basis dimension $r$, with shaded bands indicating the interquartile range of the ROM errors.  Consistent with \Cref{thm:ROM-error}, the projection errors represent the best approximations achievable by the given reduced basis independent of time integration, and bound the ROM errors from below.  Conversely, the ROM errors include the additional effect of integrating the reduced system \cref{eq:heatROM}.  For all three methods, this ROM error closely tracks the corresponding projection error, indicating that time integration introduces negligible additional error, as expected for uniformly elliptic problems. 


The error trends in \Cref{fig:errors_heat} are similarly informative.  On the training set (left panel), the errors for both tensor-based approaches are identical, dropping sharply to the order of $0.1\%$ by $r=10$ and plateauing thereafter, while the monolithic SVD error decays steadily but remains above $1\%$ even at $r=60$.  Since querying the tensorial ROMs on the training set requires no interpolation, the identical performance of RBF and MO in training is expected and shows that the weighted HOSVD strategy in \Cref{alg:M_tucker_basis} is highly effective in compressing dynamically relevant information.  On the testing set (right panel), the monolithic error curve exhibits similar behavior, but the tensorial ROM errors are noticeably higher than in training, settling near $1\%$ at $r=60$.  This train-test gap reflects the interpolation error inherent in constructing the generalized index vector $~e(~\mu)$ for unseen parameters, c.f. \Cref{prop:representation-error}.  Nevertheless, the tensorial ROMs still outperform the monolithic baseline across all values of $r$, with the RBF interpolant achieving the lowest median error for $r<30$ and the MO interpolant thereafter.

A representative visualization is provided by \Cref{fig:solution_heat}, which compares the FOM and ROM solutions at terminal time $\tau=\pi$ for $r=10$ and a representative test parameter.  Observe that the RBF and MO solutions approximate the FOM contours faithfully, with relative errors of $11.5\%$ and $14.9\%$, respectively.  The monolithic ROM, by contrast, produces an over-smoothed and visibly distorted solution with spurious nonconvexity in its contours and a relative error of $71.1\%$. It is remarkable that most of this error is concentrated near the forcing center, where 
the monolithic ROM fails to resolve finer-scale details of the solution.  Moreover, while both tensor-based strategies can resolve this, their distributions of error are quite different.



\subsection{Wave equation}

Another, more challenging example is provided by the forced wave equation with uniform wave speed:
\begin{equation}\label{eq:wave}
    \left\{\begin{aligned}
        &\frac{\partial^2}{\partial t^2}y(~ x,t)
    = \Delta y(~ x,t) + f(~ \mu, ~ x, t), \qquad t\in[0,\tau], \quad  &&~ x \in \Omega, \\
    &  y(~ x,t) = 0, \hspace{4.2cm} t\in[0,\tau], \quad && ~ x \in \partial \Omega, \\
    & y(~{x},0) = 0, \quad \frac{\partial}{\partial t} y(~ x,0) = 0, \quad && ~ x \in \Omega,
    \end{aligned}\right.
\end{equation}
where $\Omega = [0, 2\pi]^2$ is a two-dimensional domain with homogeneous Dirichlet boundary conditions.  Here,
$f$ is a prescribed, time-varying parametric forcing term given by
\begin{align*}
    f(~ \mu, ~ x, t) = \exp\left({-\frac{(x-\mu_2)^2+(y-\mu_3)^2}{2\sigma^2}}\right)\sin\!\left(\frac{x}{2}\right)\sin\!\left(\frac{y}{2}\right)\cos(\mu_1 t),
\end{align*}
in terms of a parameter vector $~\mu = [\mu_1 \; \mu_2 \; \mu_3]^\intercal$. The main difference between this forcing function and that from the previous heat equation case is the presence of periodicity in time.  This forcing does not decay and its frequency is parameterized by $\mu_1$.


Following \cite{vijaywargiya2025tensor}, a Hamiltonian structure-preserving FOM for \cref{eq:wave} is constructed by adapting the mixed finite-element scheme of \cite{SANCHEZ2021113843}, written in first-order form using canonical variables $q := y$ and $p := \partial_t y$.  Given a triangulation $\Omega_h = \{K_i\}_{i=1}^{N_E}$ of the spatial domain, define the finite-element spaces
\begin{subequations}\label{eq:wavespaces}
    \begin{align}
W_h &:= \{ w_h \in L^2(\Omega) : w_h|_K \in \mathrm{P}_k(K), \ \forall K \in \Omega_h \},\\
V_h &:= \{ ~v_h \in H(\mathrm{div},\Omega) : ~v_h|_K \in \mathrm{RT}_k(K), \ \forall K \in \Omega_h; \ ~v_h|_{\partial\Omega} = 0\},
\end{align}
\end{subequations}
where $\mathrm{P}_k(K)$ denotes the space of degree-$k$ polynomials and $\mathrm{RT}_k(K)$ the order-$k$ Raviart-Thomas space on element $K$.  The semi-discrete weak form at $(t,~\mu)$ reads: find $(\dot q_h, \dot p_h) \in W_h \times W_h$ such that
\begin{subequations}\label{eq:weakwave}
\begin{align}
(\dot{q}_h, w_h)_{\Omega_h} &= (p_h, w_h)_{\Omega_h}, &&\forall w_h \in W_h,\\
(\dot{p}_h, w_h)_{\Omega_h} &= (\nabla \cdot ~{\sigma}_h, w_h)_{\Omega_h} +  (f(~\mu), w_h)_{\Omega_h},  &&\forall w_h \in W_h,
\end{align}
\noindent where $~\sigma_h \in V_h$ is an intermediate variable satisfying
\begin{align}
    \left( ~\sigma_h, ~\xi_h \right)_{\Omega_h}
+ (q_h, \nabla \cdot ~\xi_h)_{\Omega_h}
= 0, \qquad \forall ~\xi_h \in V_h.
\end{align}
\end{subequations}
Collecting the degrees of freedom of $q_h$ and $p_h$ in vectors $~q$ and $~p$, \cref{eq:weakwave} is equivalently written in the matrix-vector form
\begin{align}\label{eq:waveFOM}
   \dot{~y}
=
\begin{bmatrix} \dot{~q} \\ \dot{~p} \end{bmatrix}
=
\begin{bmatrix}
~0 & ~I \\
- ~I & ~0
\end{bmatrix}
\begin{bmatrix}
~M_W^{-1} ~S^\intercal ~M_V^{-1} ~S & ~0 \\
~0 & ~I
\end{bmatrix}
\begin{bmatrix}
~q \\
~p
\end{bmatrix}
+
\begin{bmatrix}
    ~0\\
    ~M_W^{-1} {~f} (~\mu)
\end{bmatrix}
= 
~J\nabla^MH(~q,~p) + ~F(~\mu),
\end{align}
in terms of the canonical symplectic matrix $~J\in\mathbb{R}^{2N\times 2N}$ and the discrete Hamiltonian 
\[H(~q,~p) = \langle ~q,~M_W^{-1}~S^\intercal~M_V^{-1}~S~q\rangle_{~M_W} + |~p|^2_{~M_W}.\]
Here, $~M_W$ and $~M_V$ are the mass matrices associated with spaces $W_h$ and $V_h$, respectively, the entries of $~S$ are $S_{ji} = (\phi_i, \nabla \cdot ~\psi_j)_{\Omega_h}$, and the load vector components are $f_i(~\mu, t) = (f(~\mu; \cdot, t), \phi_i)_{\Omega_h}$.

Since both $q_h$ and $p_h$ reside in the same finite-element space $W_h$, a single $~M_W$-orthonormal basis $~U \in \mathbb{R}^{N \times r}$, obtained via the ``cotangent lift'' Proper Symplectic Decomposition (c.f. \cite{peng2016symplectic}), is used to approximate both fields, i.e., $~q \approx ~U\hat{~q}$ and $~p \approx ~U\hat{~p}.$
This is equivalent to approximating the full state $~y = [~q^\intercal\; ~p^\intercal]^\intercal$ with a block-diagonal basis
\begin{align}\label{eq:cotlift_block}
    \tilde{~y} =    \begin{bmatrix} ~U & ~0 \\ ~0 & ~U \end{bmatrix}
    \begin{bmatrix} \hat{~q} \\ \hat{~p} \end{bmatrix}
    \approx ~y.
\end{align}
The advantage of using a cotangent lift basis is equivariance with respect to the action of the symplectic group.  This means that canonical Hamiltonian FOMs such as \eqref{eq:waveFOM} directly yield canonical Hamiltonian ROMs under Galerkin projection; see \cite{peng2016symplectic,gruber2024vc,vijaywargiya2025tensor} for more details. Substituting the approximation \cref{eq:cotlift_block} into \cref{eq:waveFOM} and testing the $~q$- and $~p$-equations separately against $~U$ in the $~M_W$-weighted inner product gives the Galerkin ROM
\begin{subequations}\label{eq:waveROMfull}
\begin{align}
    ~U^\intercal ~M_W ~U\, \dot{\hat{~q}}(t) &= ~U^\intercal ~M_W ~U\, \hat{~p}(t), \\
    ~U^\intercal ~M_W ~U\, \dot{\hat{~p}}(t) &= -~U^\intercal ~S^\intercal ~M_V^{-1} ~S\, ~U\, \hat{~q}(t) + ~U^\intercal ~f(~\mu; t).
\end{align}
\end{subequations}
When $~U$ is constructed to be $~M_W$-orthonormal, $~U^\intercal ~M_W ~U = ~I$ and \cref{eq:waveROMfull} simplifies to
\begin{align}\label{eq:waveROM}
    \dot{\hat{~q}}(t) = \hat{~p}(t), \qquad
    \dot{\hat{~p}}(t) = -\hat{~A}\,\hat{~q}(t) + \hat{~f}(~\mu; t),
\end{align}
where $\hat{~A} = ~U^\intercal ~S^\intercal ~M_V^{-1} ~S\, ~U$ and $\hat{~f} = ~U^\intercal ~f$ denote the projected components of the momentum equation.  The block-diagonal structure of \cref{eq:cotlift_block} ensures that the reduced system \cref{eq:waveROM} inherits the Hamiltonian structure of the FOM:
\begin{align*}
      \dot{\hat{~y}}
=
\begin{bmatrix} \dot{\hat{~q}} \\ \dot{\hat{~p}} \end{bmatrix}
=
\begin{bmatrix}
~0 & ~I \\
- ~I & ~0
\end{bmatrix}
\begin{bmatrix}
\hat{~A} & ~0 \\
~0 & ~I
\end{bmatrix}
\begin{bmatrix}
\hat{~q} \\
\hat{~p}
\end{bmatrix}
+
\begin{bmatrix}
    ~0\\
    \hat{~f} (~\mu)
\end{bmatrix}
= ~J\nabla\hat{H}(\hat{~q},\hat{~p}) + \hat{~F}(~\mu),
\end{align*}
where $\hat{H} = H \circ {\rm blkdiag}(~U,~U)$ denotes the pullback of the original discrete Hamiltonian onto the span of the block-diagonal reduced basis.


Concretely, the reduced-order system $\eqref{eq:waveROM}$ is solved with bases constructed from the following data.  First, displacement and momentum snapshot matrices $~Q_i, ~P_i \in \mathbb{R}^{N \times T}$ , collected by solving the FOM \eqref{eq:waveFOM} at each training parameter $~\mu_i$, are concatenated along the temporal dimension to form the lifted snapshot matrix $[~Q_i \mid ~P_i] \in \mathbb{R}^{N \times 2T}$. 
Stacking these across all $P$ training parameters then yields the three-way tensor $~X \in \mathbb{R}^{N \times 2T \times P}$ of snapshot data.  From this, \Cref{alg:M_tucker_basis} is applied to $~X$ with mass matrix $~M_W$, and parameter-specific $~M_W$-orthonormal bases $~U_\mu$ are extracted from the local core matrix exactly as in the heat example.  The monolithic baseline is again obtained from the thin SVD of the (appropriately weighted) column-wise matricization of $~X_{(1)} \in \mathbb{R}^{N \times 2T P}$, retaining the leading $~M_W$-orthonormal left singular vectors.

\subsubsection{Experimental details}

The polynomial order of the finite-element spaces \cref{eq:wavespaces} is set to $k=2$, yielding $N=2160$ degrees of freedom for each of $~q$ and $~p$.  The FOM \cref{eq:waveFOM} is integrated in time over the interval $[0, 8\pi]$ with step-size $\Delta t = 8\pi/500$ using with the implicit midpoint rule, producing $T = 501$ snapshots per parameter instance.  The temporal frequency $\mu_1$ is sampled uniformly from $(0.01, 0.05)$ and the spatial center $(\mu_2, \mu_3)$ uniformly from $\Omega$, for a total of $P = 200$ samples split into 160 training and 40 testing instances.

The training snapshots form the lifted tensor $~X \in \mathbb{R}^{2160 \times 1002 \times 160}$.  Applying \Cref{alg:M_tucker_basis} with Tucker ranks $(N, T, P) = (120, 120, 120)$ yields a relative $~M_W$-weighted representation error of $9.19 \times 10^{-3}$.  For a given test parameter $~\mu$, the generalized index vector $~e(~\mu)$ is constructed using both the MO interpolant with $15$ nearest neighbors and the RBF interpolant with shape parameter $\varepsilon = 1$; a rank-$r$ basis $~U_\mu$ is then extracted as described in \Cref{alg:M_tucker_basis}.  All errors are reported in the $~M_W$-weighted Frobenius norm for the displacement field $~q$ and momentum field $~p$. 

\begin{figure}[ht]
    \centering
    \includegraphics[width=\linewidth]{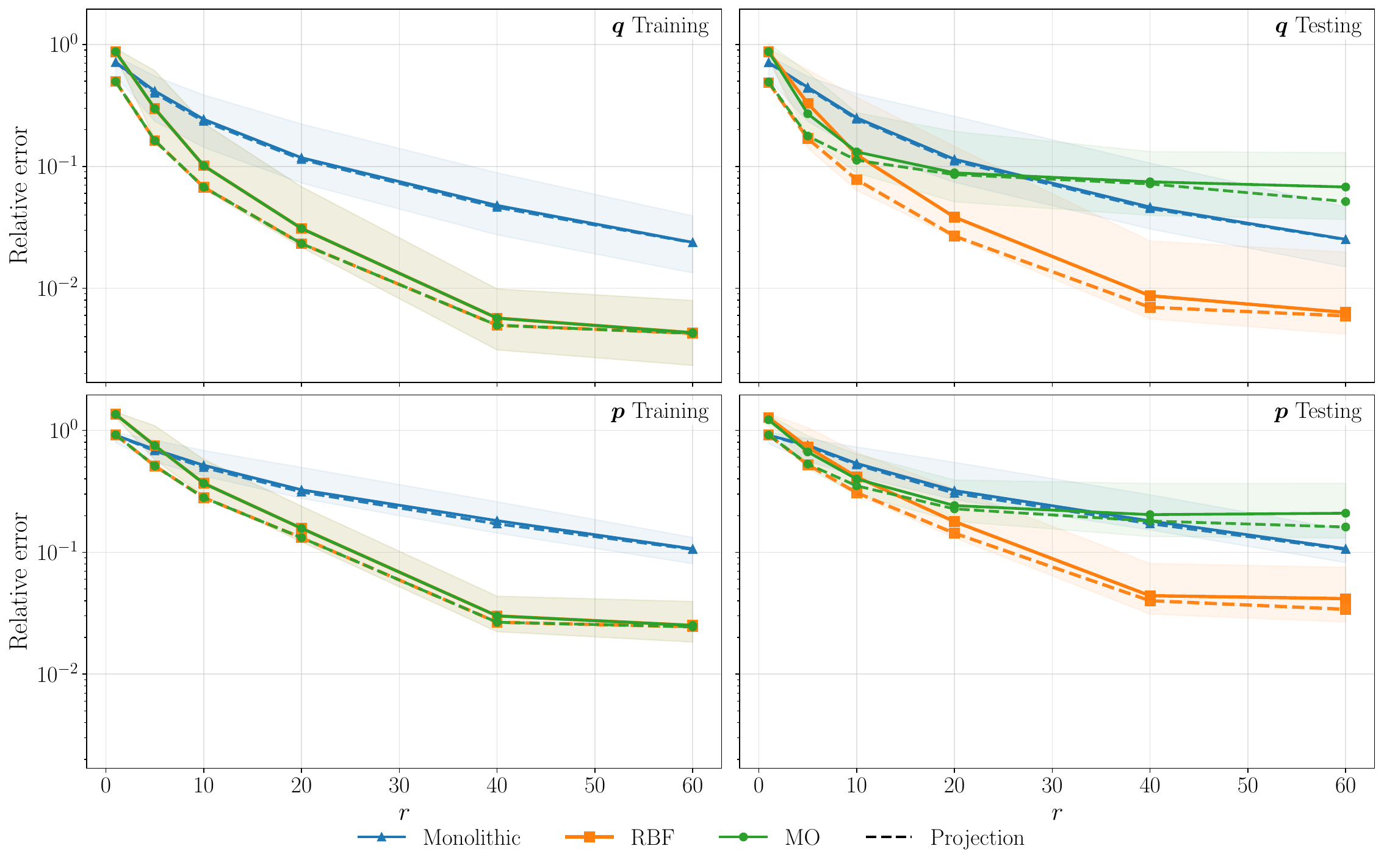}
    \caption{Relative $~M_W$-weighted error in the ROM solutions to the wave system as a function of reduced basis dimension $r$, considered over all training (left) and testing (right) parameters.  Top row: displacement error; bottom row: momentum error. For each method, solid lines denote the median ROM error across parameter instances while dashed lines denote the median projection error.  Shaded bands indicate the interquartile range of the ROM error.}\label{fig:errors_wave}
\end{figure}

\begin{figure}[ht]
  \centering
  \includegraphics[width=\linewidth]{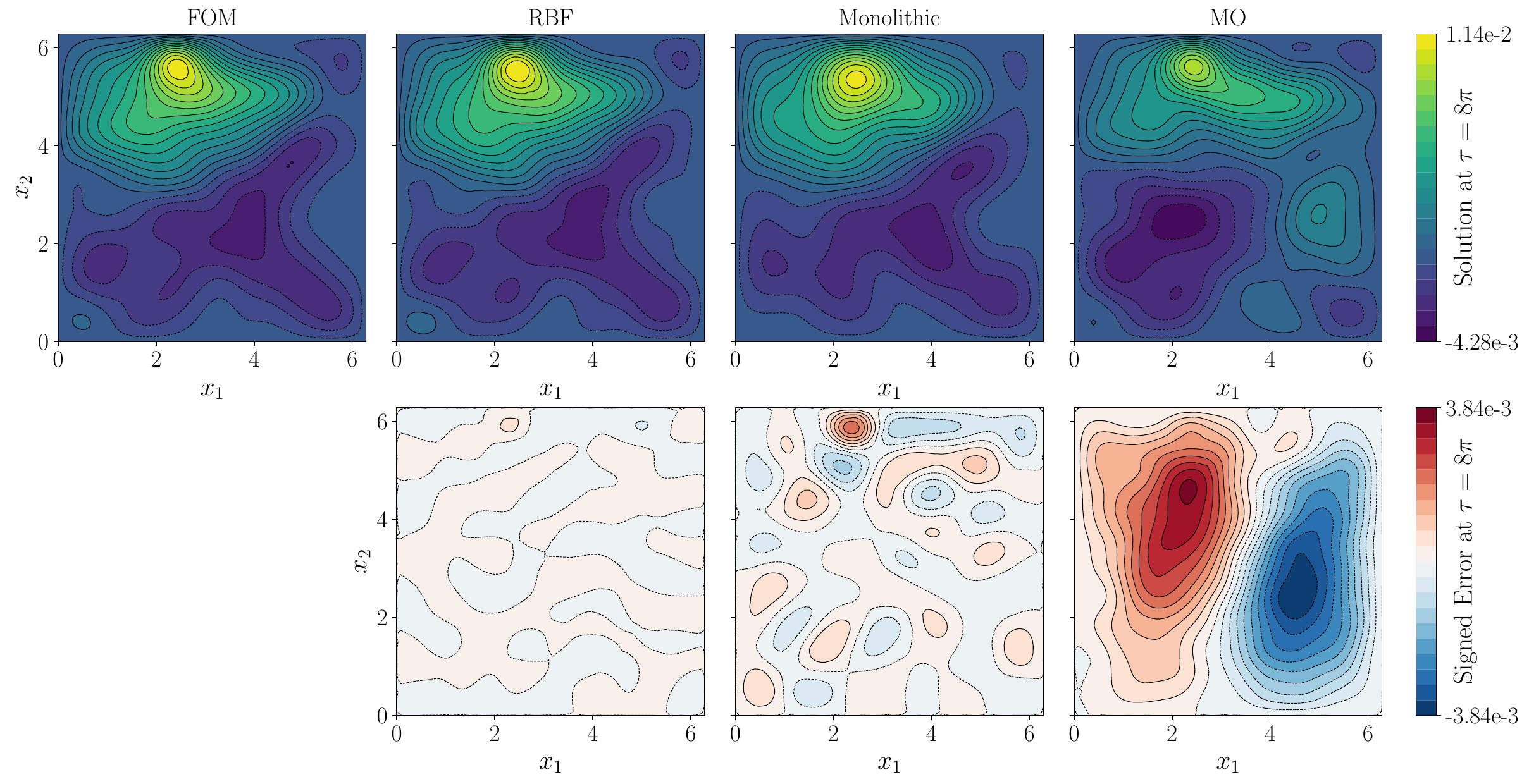}
  \caption{Top row: FOM and ROM displacement solutions to the wave system at terminal time $\tau=8\pi$ for a sample test parameter with reduced state dimension $r=40$.  Bottom row: pointwise signed error in each ROM solution.  Here, the RBF solution is visually closest to the FOM solution, while the MO solution exhibits the largest discrepancy.  }\label{fig:solution_wave}
\end{figure}

\Cref{fig:errors_wave} shows the median relative $~M_W$-weighted ROM errors (solid lines) and the median projection errors (dashed lines) as functions of the reduced dimension $r$; shaded bands denote the interquartile range of the ROM errors. Similar to the case of the heat equation, the ROM errors remain close to the projection errors, indicating that the reduced Hamiltonian system \eqref{eq:waveROM} is a good approximation to the full-order dynamics.
Observe that the two HOSVD-based methods again exhibit identical performance on the training data (left column), with median error leveling off near $0.5\%$ resp. $5\%$ in position resp. momentum by $r=40$.
In contrast, the monolithic SVD approach converges more slowly, reaching around $4\%$ at $r=40$ but continuing to decrease steadily and gradually. Though all three methods reach comparable accuracy at $r=120$ (not pictured), it is evident that the tensorial strategy is much more parameter-efficient at moderate basis sizes, as expected from its nonlinear calibration to parametric information. 

The behavior of each method on the testing set (right column of \Cref{fig:errors_wave}) is also interesting.  The monolithic ROM displays a trend similar to that observed during training, with errors that steadily decrease as modes are added, and performance similar to the reproductive case.  In contrast, the tensorial ROMs constructed with RBF and MO interpolation strategies produce very different results.  The GRBF-interpolated ROM behaves similarly to its training performance and shows the fastest error reduction of all methods, falling below $1\%$ in displacement (roughly $8\%$ in momentum) by $r=40$.
The MO ROM, however, exhibits a pronounced train–test discrepancy: its median error stagnates over the range $20 \le r \le 80$ and quickly becomes larger than that of the monolithic ROM. This behavior indicates substantial interpolation error incurred by the MO approach in this case and suggests that a larger set of training parameters may be required to recover optimal performance. In contrast, the interpolation error incurred by the RBF scheme is much smaller, perhaps due to its increased regularity and suitability in sparse-data settings.  While the monolithic ROM eventually reaches the performance of the RBF version at $r=120$ (not pictured), the proposed tensorial ROM with RBF interpolation produces errors remaining markedly lower than the other two methods across the full range of $r$ before this point.

\Cref{fig:solution_wave} compares the FOM and ROM displacement fields at the terminal time $\tau = 8\pi$ for $r=40$ and a representative test parameter.  The RBF ROM reconstruction is reasonably close to the FOM solution, with a relative error of $3.5\%$ and a decent match to qualitative features such as the contour in the lower-left corner.  The monolithic ROM captures the dominant wave structure but exhibits scattered patches of localized error, particularly near the forcing peak, yielding an overall relative error of $11.9\%$.  The solution to the MO ROM shows the largest discrepancy, with large pointwise errors spread broadly across the domain and a relative error of $37.3\%$, consistent with its elevated test-set errors observed in \Cref{fig:errors_wave} at moderate $r$.

\subsection{Maxwell's equations}

The final example considered here is a challenging 3D test case from electromagnetics. 
Consider the following initial boundary value problem for Maxwell's equations:
\begin{equation}\label{eq:maxwell}
    \left\{\begin{aligned}
        &\frac{\partial ~ E}{\partial t} = \nabla \times ~ B - ~ J(~ \mu; ~ x, t), \qquad t \in [0, \tau], \quad &&~ x \in \Omega, \\
        &\frac{\partial ~ B}{\partial t} = -\nabla \times ~ E, \qquad\qquad\qquad\,\,\,\, t \in [0, \tau], \quad &&~ x \in \Omega, \\
        & ~ E(~ x, t) \times ~ n = ~ 0, \hspace{2.3cm} t \in [0, \tau], \quad &&~ x \in \partial\Omega, \\
        & ~ E(~ x, 0) = ~ 0, \quad ~ B(~ x, 0) = ~ 0, \quad &&~ x \in \Omega,
    \end{aligned}\right.
\end{equation}
posed on the cubic domain $\Omega = [0,2]^3$ with perfect electric conductor (PEC) boundary conditions, where $~n$ denotes the outward unit normal to $\partial\Omega$.  Here, $~ E$ and $~ B$ are the electric and magnetic fields, and $~ J$ is a prescribed, time-varying, parametric current source given by\footnote{Note that this $~J\in\mathbb{R}^3$ is not the same object as the canonical symplectic matrix from before.}
\begin{align}\label{eq:current}
    ~ J(~ \mu;~ x, t) = s(t)\exp\!\left({-\frac{\|~ x - ~ \mu\|^2}{2\sigma^2}}\right)~ d, \qquad s(t) = \tfrac{1}{2}\!\left(1 - \cos\frac{2\pi t}{T}\right),
\end{align}
where $~ \mu \in \mathbb R^3$ is the spatial center of the source, $\sigma$ is its spatial width, and $~d\in\mathbb{R}^3$ is the polarization direction.  The source thus varies parametrically only in its spatial center $~ \mu$.

The numerical solution of Maxwell's equations finds many uses in modeling antennas, waveguides, and as a component in multiphysics simulations required for prototype fusion devices. The problem posed above is designed to model the response of a closed system to an electromagnetic insult. The injected current stimulates modes at relatively low spatial/temporal frequencies, creating a resonance in the cavity (in this case, a cube). Modes are allowed to ring within the cavity without a change in energy after the cessation of the injected current.

The FOM for \cref{eq:maxwell} is constructed using a mixed-form compatible finite-element method on an unstructured tetrahedral mesh $\Omega_h$ (the formulation is motivated by~\cite{bettencourt2021empire}).  The electric field $~ E_h$ is discretized in the $H(\mathrm{curl})$-conforming N\'{e}d\'{e}lec space
\begin{align*}
    ~ W_h := \{~ w_h \in H(\mathrm{curl},\Omega) : ~ w_h|_K \in \mathcal{N}_k(K),\ \forall K \in \Omega_h;\ ~ w_h \times ~ n|_{\partial\Omega} = ~ 0\},
\end{align*}
and the magnetic field $~ B_h$ in the $H(\mathrm{div})$-conforming Raviart--Thomas space
\begin{align*}
    ~ V_h := \{~ v_h \in H(\mathrm{div},\Omega) : ~ v_h|_K \in \mathrm{RT}_{k-1}(K),\ \forall K \in \Omega_h\}.
\end{align*}
The semi-discrete weak form for each $(t,~\mu)$ reads: find $(~ E_h, ~ B_h) \in ~ W_h \times ~ V_h$ such that
\begin{subequations}\label{eq:weakmaxwell}
\begin{align}
    (\dot{~ E}_h,\, ~ w_h)_{\Omega_h} &= \left(~ B_h,\, \nabla \times ~ w_h\right)_{\Omega_h} - \left(~ J(~\mu),\, ~ w_h\right)_{\Omega_h} && \forall\, ~ w_h \in ~ W_h, \label{eq:weakmaxwell_a}\\
    (\dot{~ B}_h,\, ~ v_h)_{\Omega_h} &= -\left(\nabla \times ~ E_h,\, ~ v_h\right)_{\Omega_h} && \forall\, ~ v_h \in ~ V_h.
\end{align}
\end{subequations}
In \cref{eq:weakmaxwell_a}, the curl appears on the test function, and this is equivalent to the strong form when $~B_h$ is regular enough due to strong enforcement of the PEC condition in $~W_h$.
Denoting the basis functions of $~ W_h$ and $~ V_h$ by $\{~\phi_i\}_{i=1}^{N_E}$ and $\{~\psi_i\}_{i=1}^{N_B}$, respectively, expanding $~E_h$ and $~B_h$ in these bases, and collecting the corresponding degrees of freedom in vectors $~e \in \mathbb{R}^{N_E}$ and $~b \in \mathbb{R}^{N_B}$, \cref{eq:weakmaxwell} is equivalently written in the matrix-vector form
\begin{subequations}\label{eq:maxwellFOM}
\begin{align}
    ~M_E\, \dot{~e}(t) &= ~\delta^\intercal ~M_B\, ~b(t) - ~j(~\mu; t), \\
    ~M_B\, \dot{~b}(t) &= -~M_B ~\delta\, ~e(t), \label{eq:maxwellFOM_b}
\end{align}
\end{subequations}
where $~M_E \in \mathbb{R}^{N_E \times N_E}$ and $~M_B \in \mathbb{R}^{N_B \times N_B}$ are the mass matrices associated with $~ W_h$ and $~ V_h$, respectively.  Here, $~\delta \in \mathbb{R}^{N_B \times N_E}$ is the combinatorial curl and $~j(~\mu; t) \in \mathbb{R}^{N_E}$ is the current load vector with entries:
\begin{align*}
[~M_B ~\delta]_{ij} = (\nabla \times ~\phi_j,\, ~\psi_i)_{\Omega_h}, 
\qquad
[~j]_i = (~ J(~\mu;\cdot,t),\, ~\phi_i)_{\Omega_h}
\end{align*}
Observe that both $~\delta^\intercal ~M_B$ and its negative (Euclidean) adjoint $-~M_B ~\delta$ appear in the equations of motion, reflecting the (noncanonical) Hamiltonian structure of \cref{eq:maxwell} in the absence of the current source\footnote{Euclidean skew-symmetry is a consequence of the mass matrices which appear on the left-hand side, see e.g. \cite{vijaywargiya2025tensor}.}.  The FOM \cref{eq:maxwellFOM} is integrated in time using the velocity Verlet (leap-frog) scheme:
\begin{align*}
    ~e_{n+1/2} &= ~e_n + \tfrac{\Delta t}{2}\, ~M_E^{-1}\!\left(~\delta^\intercal ~M_B\,~b_n - ~j(~\mu;\, t_n)\right), \\
    ~b_{n+1} &= ~b_n - \Delta t\, ~\delta\, ~e_{n+1/2}, \\
    ~e_{n+1} &= ~e_{n+1/2} + \tfrac{\Delta t}{2}\, ~M_E^{-1}\!\left(~\delta^\intercal ~M_B\,~b_{n+1} - ~j(~\mu;\, t_{n+1})\right).
\end{align*}
Notice that the mass matrix $~M_B$ does not appear explicitly in the update equation for $~b$.

To design the Galerkin ROM, separate bases $~U_E \in \mathbb{R}^{N_E \times r}$ and $~U_B \in \mathbb{R}^{N_B \times r}$ are used to approximate
the electric and magnetic field degrees of freedom. Substituting the approximations $\tilde{~e} = ~U_E\hat{~e} \approx ~e$ and $\tilde{~b} = ~U_B\hat{~b} \approx ~b$ into \cref{eq:maxwellFOM} and testing the two equations against $~U_E$ and $~U_B$, respectively, gives the Galerkin ROM
\begin{subequations}\label{eq:maxwellROMfull}
\begin{align}
    ~U_E^\intercal ~M_E ~U_E\, \dot{\hat{~e}}(t) &= ~U_E^\intercal ~\delta^\intercal ~M_B ~U_B\, \hat{~b}(t) - ~U_E^\intercal ~j(~\mu; t), \\
    ~U_B^\intercal ~M_B ~U_B\, \dot{\hat{~b}}(t) &= -~U_B^\intercal ~M_B ~\delta ~U_E\, \hat{~e}(t).
\end{align}
\end{subequations}
When $~U_E$ is $~M_E$-orthonormal and $~U_B$ is $~M_B$-orthonormal, the operators on the left-hand side become the identity, simplifying \cref{eq:maxwellROMfull} to the reduced-order system
\begin{align}\label{eq:maxwellROM}
    \dot{\hat{~e}}(t) = \hat{~\delta}_1^\intercal\hat{~b}(t) - \hat{~j}(~\mu; t), \qquad
    \dot{\hat{~b}}(t) = -\hat{~\delta}_1\hat{~e}(t),
\end{align}
where $\hat{~\delta}_1 = ~U_B^\intercal ~M_B ~\delta ~U_E$ and $\hat{~j} = ~U_E^\intercal ~j$ are the reduced curl and current, respectively. Observe that the skew-symmetric coupling of the reduced operators in  \cref{eq:maxwellROM} mirrors that of \cref{eq:maxwell}.  Therefore, the noncanonical Hamiltonian structure (in the absence of the current source) is preserved at the reduced level, i.e., conservation of  $H(~e,~b) = |~e|^2_{~M_E} + |~b|^2_{~M_B}$ at the full-order level implies conservation of the pullback $\hat{H} = H \circ {\rm blkdiag}(~U_E,~U_B)$ at the reduced level (c.f. \cite{GRUBER2023116334}).  This is guaranteed for the ROM \cref{eq:maxwellROM} discretely in time though the use of the same velocity Verlet integration scheme.

It is crucial to note that discrete compatibility is not necessarily preserved at the level of the ROM. Since the bases $~U_E$ and $~U_B$ are constructed independently, it is generally the case that
$$
\mathrm{im}(~\delta^\intercal ~M_B ~U_B) \nsubseteq \mathrm{im}(~U_E), \qquad \mathrm{im}(~M_B ~\delta ~U_E) \nsubseteq \mathrm{im}(~U_B).
$$
Said differently, the image ${\rm im}(~\delta~U_E) \nsubseteq {\rm im}(~U_B)$ of the curl applied to the electric field basis may not lie in the span of the magnetic field basis.  Hence, the projected operators may incur large closure errors even as the sizes of two bases are increased, potentially leading to significant inaccuracies in the ROMs. To restore approximate closure and mitigate this issue, it is therefore desirable to enrich each of the bases $~U_E,~U_B$ with the discrete curl of the other. Orthonormalizing with respect to the corresponding mass matrices yields the enriched bases 
$$~ U_E^{\text{enr}} = \text{orth}_{~ M_E}\left([~ U_E, \delta^\intercal ~ M_B ~ U_B]\right), \quad ~ U_B^{\text{enr}} = \text{orth}_{~ M_B}\left([~ U_B, \delta ~ U_E]\right),$$
where $[\cdot,\cdot]$ denotes horizontal concatenation. The rest of the discussion will assume that both bases are enriched, with superscripts dropped for simplicity. 

A final concern when constructing the ROM \cref{eq:maxwellROM} is the costly evaluation of the projected load vector $\hat{~j}(~\mu;t) = ~U_E^\intercal ~j(~\mu;t)$.  For each $~\mu$, this requires assembling the full load vector $~j(~\mu;t) \in \mathbb{R}^{N_E}$ at each time step. Owing to the separable structure of the current source \cref{eq:current} in its time and space coordinates, the load vector admits the decomposition $~j(~\mu;t) = s(t)\,~j_{\rm sp}(~\mu)$, where $~j_{\rm sp}(~\mu) \in \mathbb{R}^{N_E}$ requires only a single assembly per parameter query.  However, this assembly still requires integrating over all $N_E$ DOFs, exacerbating online costs and reducing ROM speed-up.  Thankfully, this can be mitigated by employing Q-DEIM hyper-reduction \cite{drmac2016new} to approximate $~j_{\rm sp}(~\mu)$ using evaluations at only $m \ll N_E$ selected DOFs.
In the \emph{offline} stage of Q-DEIM, $~j_{\rm sp}$ is assembled for each training parameter, forming the snapshot matrix $~J_{\rm snap} \in \mathbb{R}^{N_E \times N_{\rm train}}$.  A rank-$m$ truncated SVD then yields a low-dimensional subspace basis $~U_J \in \mathbb{R}^{N_E \times m}$ approximating the range of $~J_{\rm snap}$.  Then, the pivoted QR decomposition applied to the basis $~U_J^\intercal$ selects $m$ interpolation (row) indices and defines the selection matrix $~P = [~e_{i_1}\mid\cdots\mid~e_{i_p}] \in \mathbb{R}^{N_E \times m}$, where $~e_i$ denotes the $i$-th standard basis vector in $\mathbb{R}^{N_E}$.  It follows that multiplication by $~P^\intercal$ extracts the relevant $m$ entries of any vector in $\mathbb{R}^{N_E}$.  With this, the Q-DEIM approximation to the full spatial component of the load vector is
\begin{align*}
    ~j_{\rm sp,hr}(~\mu) ~U_J(~P^\intercal ~U_J)^{-1}~P^\intercal ~j_{\rm sp}(~\mu) \approx ~j_{\rm sp}(~\mu),
\end{align*}
which requires only the assembly of $m$ entries of $~j_{\rm sp}$ at each time step.  Substituting this approximation into the reduced forcing $\hat{~j} = ~U_E^\intercal ~j$ gives the hyper-reduced forcing
\begin{align*}
    \hat{~j}_{\rm hr}(~\mu; t) = s(t)~C_J~P^\intercal ~j_{\rm sp}(~\mu) \approx  \hat{~j}(~\mu; t), \qquad ~C_J := ~U_E^\intercal ~U_J(~P^\intercal ~U_J)^{-1} \in \mathbb{R}^{r \times m},
\end{align*}
where $~C_J$ is pre-computed once offline for each query parameter.  In the \emph{online} stage, evaluating $~P^\intercal ~j_{\rm sp}(~\mu)$ requires integrating only over the mesh elements that contain at least one interpolation index, so that the approximate reduced forcing is obtained at a cost that scales only with $m$ and $r$ and independently of $N_E$.  This hyper-reduced forcing $\hat{~j}_{\rm hr}$ is used in all present simulations involving the ROM \eqref{eq:maxwellROM}.

\begin{figure}[ht]
    \centering
    \includegraphics[width=\linewidth]{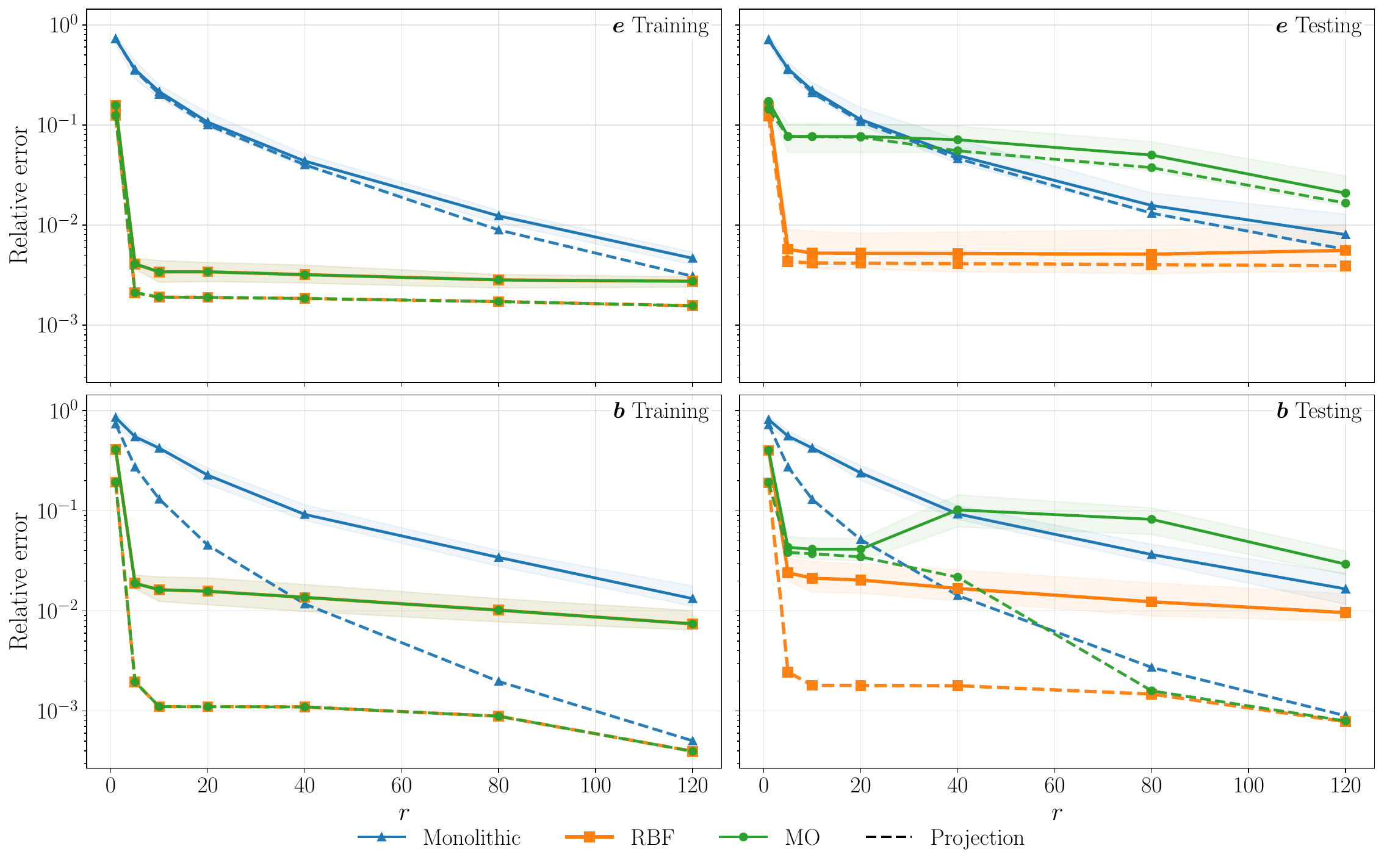}
    \caption{Relative $~M_E$-weighted $L^2$ error in the ROM solutions to the Maxwell system as a function of reduced basis dimension $r$, considered over all training (left) and testing (right) parameters.  Top row: electric field error; bottom row: magnetic field error.  For each method, solid lines denote the median ROM error and dashed lines denote the median projection error.  Shaded bands indicate the interquartile range of the ROM error. }\label{fig:errors_maxwell}
\end{figure}

\subsubsection{Experimental details}

Choosing an unstructured tetrahedral mesh for the domain and the polynomial degree $k=1?$ for the finite element spaces $~W_h$ and $~V_h$ yields $N_E = 49320$ degrees of freedom for the electric field $~{E}_h$ and $N_B = 82350$ for the magnetic field $~{B}_h$. The FOM \cref{eq:maxwellFOM} is integrated over $[0, 2.5]$ with step-size $\Delta t = 2.5/120$, producing $T = 121$ snapshots per parameter instance.  The source center $~\mu$ is sampled uniformly from $[0.5, 1.5]^3$ for a total of $P = 200$ samples split into 160 training and 40 testing instances.

\Cref{alg:M_tucker_basis} is applied separately to the electric and magnetic snapshot tensors $~X_E \in \mathbb{R}^{49320 \times 121 \times 160}$ and $~X_B \in \mathbb{R}^{82350 \times 121 \times 160}$, using $~M_E$- and $~M_B$-weighted inner products, respectively.  The choice of Tucker ranks $(N, T, P) = (150, 120, 150)$ yields relative representation errors of $2.11 \times 10^{-3}$ for $~{E}_h$ and $1.16 \times 10^{-3}$ for $~{B}_h$.  For a given test parameter $~\mu$, rank-$r$ bases $~U_{\mu,E}$ and $~U_{\mu,B}$ are extracted using the RBF interpolant with $\varepsilon = 1.8$ and the MO interpolant with $15$ nearest neighbors. The monolithic baseline uses the leading $~M_E$- resp. $~M_B$-orthonormal left singular vectors of the matrix unfoldings $~X_{E,(1)}$ resp. $~X_{B,(1)}$. The curl enrichment described above is then applied to all three sets of reduced bases.
For Q-DEIM hyper-reduction of the load vector, rank $m = 80$ is used, capturing $99.999\%$ of the energy in $~J_{\rm snap}$; the resulting interpolation indices correspond to $480$ mesh elements ($1.2\%$ of the total 40500 tetrahedra).  The median Q-DEIM reconstruction error on the training set is $4.85 \times 10^{-3}$.

\Cref{fig:errors_maxwell} reports the median relative $~M_E$- and $~M_B$-weighted ROM errors (solid lines) and projection errors (dashed lines) as functions of the reduced dimension $r$, with shaded bands indicating the interquartile range of the ROM error. On the training set, both tensorial ROMs are identical and substantially outperform the monolithic SVD ROM for the electric and magnetic fields across the full range of $r$, with their median errors dropping rapidly and plateauing near $0.75\%$ for $r \ge 10$ in the case of the electric field and continually decreasing to near $1\%$ in the case of the magnetic field.  In contrast, the monolithic ROM improves slowly, with its error remaining more than an order of magnitude higher at $r = 10$ and only approaching the errors of the tensorial approaches near $r = 120$. The ROM error of each method closely tracks the projection error in the $~e$ case but not in the $~b$ case, an artifact of the compatibility problem between the bases $~U_E$ and $~U_B$ mentioned previously.  
On the testing set, the behavior of the monolithic baseline is similar to its performance on the training data.  The proposed RBF method yields the lowest median error by a wide margin, with testing errors dropping near $0.3\%$ for $~{e}$ and $2\%$ for $~b$ at $r=10$. 
In contrast, the MO ROM exhibits a pronounced train--test gap: the testing error for $~e$ plateaus around $7\%$ for $10 \le r \le 80$ and decreases slowly after, while the testing error for $~b$ is not monotone, peaking around $10\%$ at $r=40$ and exceeding the error in the monolithic ROM for $r\geq 40$.  Note that this behavior is independent of reduced-order incompatibility: all methods suffer a gap between their projection and ROM errors in the $~b$ case, but only the MO ROM's performance fluctuates with the addition of modes.  This suggests that the the proposed RBF strategy is also robust in the presence of imperfect ROMs, yielding greatly reduced errors with more predictable performance as modes are added.


An example visualization is provided in \Cref{fig:maxwell_E,fig:maxwell_B}, which display the magnitude along with the three components of the FOM and ROM-computed electric and magnetic fields at the terminal time $\tau = 2.5$    on the midplane slice $z = 0.5$ for a representative test parameter with $r = 10$.  The RBF tensorial ROM approximations to $~{e}$ and $~{b}$ closely resemble the true fields, with relative errors of $0.88\%$ and $3.1\%$, respectively.  For this parameter and basis size, the MO tensorial ROM is also accurate, with relative errors of $4.2\%$ and $3.8\%$, respectively.  The monolithic ROM, in contrast, struggles severely to capture the features of either field at this basis size, with large relative errors of $36.9\%$ and $44.4\%$ in the computed $~e$ and $~b$.  This demonstrates that the solution to the FOM \eqref{eq:maxwellFOM} is highly nonlinear in the parameters $~\mu$, providing further support for the utility of the tensorial ROM in practical cases of interest.

\begin{figure}[htb]
    \centering
    \includegraphics[width=0.75\linewidth]{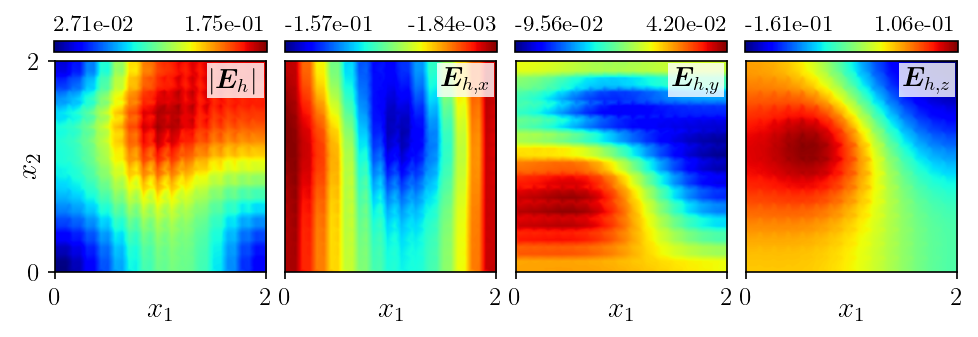}\\
    \makebox[\linewidth][c]{\textbf{(a)} FOM}
    \includegraphics[width=0.75\linewidth]{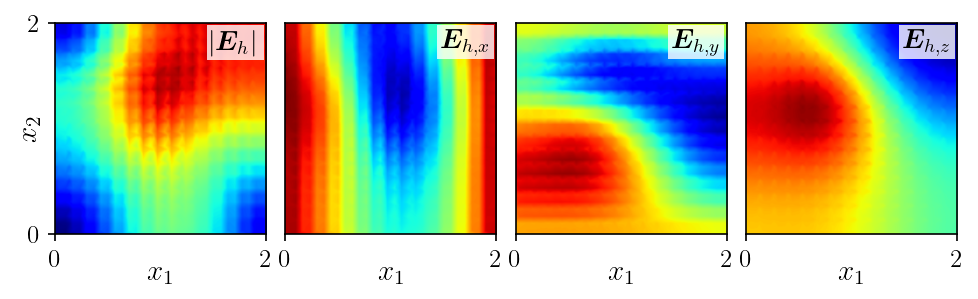}\\
    \makebox[\linewidth][c]{\textbf{(b)} RBF}
    \includegraphics[width=0.75\linewidth]{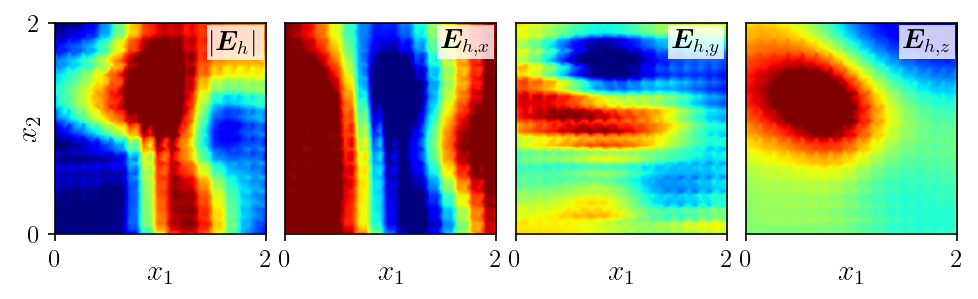}\\
    \makebox[\linewidth][c]{\textbf{(c)} Monolithic}
    \includegraphics[width=0.75\linewidth]{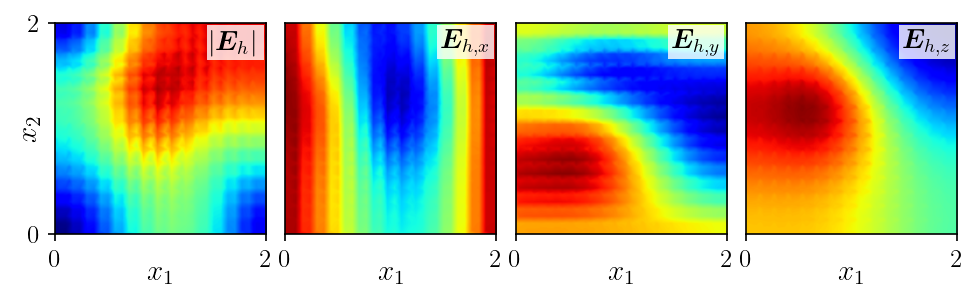}\\
    \makebox[\linewidth][c]{\textbf{(d)} MO}
    \caption{Magnitude and components of $~{E_h}$ at $\tau=2.5$ on the slice $z=0.5$ obtained from the FOM and ROM solutions to the Maxwell system for a sample test parameter with ROM dimension $r=10$. The RBF and MO ROMs yield small errors of $0.88\%$ and $4.2\%$, while the monolithic ROM produces an extremely large error of $36.9\%$.}\label{fig:maxwell_E}
\end{figure}

\begin{figure}[htb]
    \centering
    \includegraphics[width=0.75\linewidth]{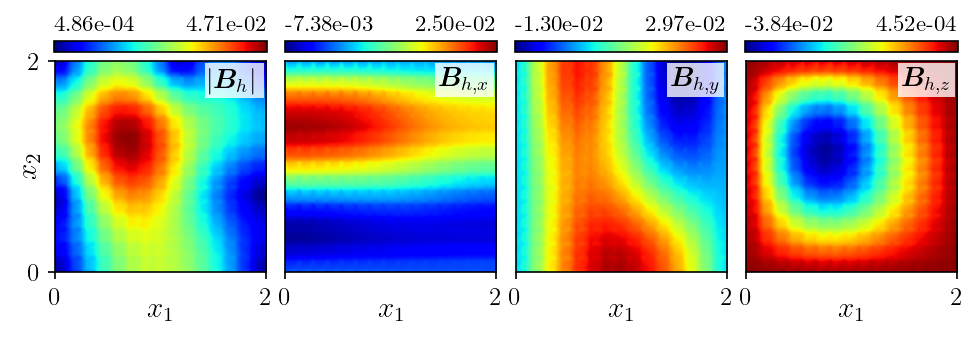}\\
    \makebox[\linewidth][c]{\textbf{(a)} FOM}
    \includegraphics[width=0.75\linewidth]{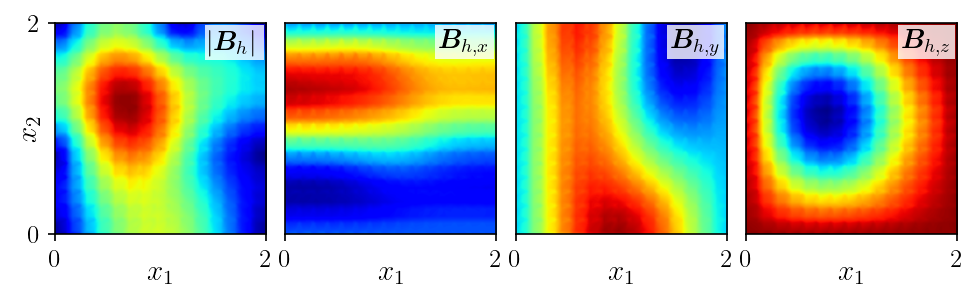}\\
    \makebox[\linewidth][c]{\textbf{(b)} RBF}
    \includegraphics[width=0.75\linewidth]{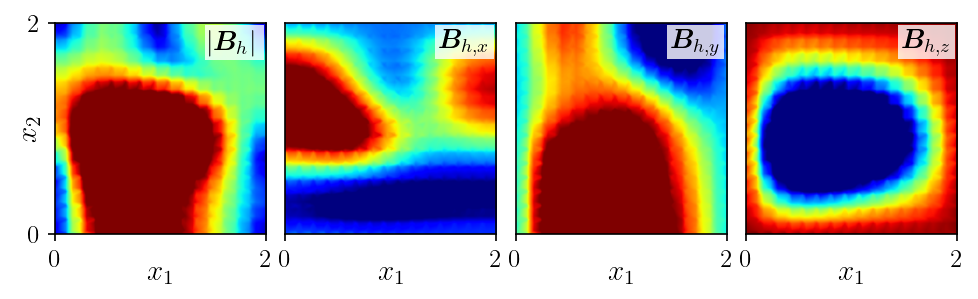}\\
    \makebox[\linewidth][c]{\textbf{(c)} Monolithic}
    \includegraphics[width=0.75\linewidth]{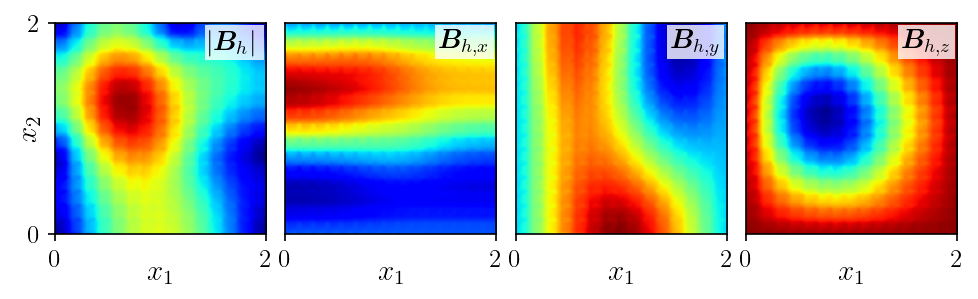}\\
    \makebox[\linewidth][c]{\textbf{(d)} MO}
    \caption{Magnitude and components of $~{B_h}$ at $\tau=2.5$ on the slice $z=0.5$ obtained from the FOM and ROM solutions to the Maxwell system for the same test parameter as \Cref{fig:maxwell_E} with ROM dimension $r=10$. The RBF and MO ROMs yield small errors of $3.1\%$ and $3.8\%$, while the monolithic ROM produces an extremely large error of $44.4\%$.}\label{fig:maxwell_B}
\end{figure}

\section{Conclusion}\label{sec:conclusion}
A structure-aware extension of the HOSVD-based tensorial ROM from \cite{mamonov2022interpolatory} has been proposed, analyzed, and evaluated on benchmark problems with gradient flow and Hamiltonian structure.  By integrating  arbitrary mass-orthonormal basis construction and a novel RBF snapshot interpolation strategy into the tensorial ROM, the proposed strategy was shown to be rigorously controllable and structure-preserving while maintaining the benefits of nonlinearity in the reduced basis.  Even among tensorial ROM strategies, it was shown that RBF-based snapshot interpolation yields more performant local reduced bases in various cases of interest, particularly in low-data and/or hyperbolic regimes where monolithic ROMs are known to struggle.  Evaluation of the proposed approach on 2D and 3D benchmarks has demonstrated its promise for improving model reduction over a monolithic approach with low additional overhead, enabling more effective surrogate models for challenging  problems such as the Maxwell case presented here.


\section*{Acknowledgments}

Support for this work was received through the U.S. Department of Energy, Office of Science, Office of Advanced Scientific Computing Research, Mathematical Multifaceted Integrated Capability Centers (MMICCS) program, under the Scalable, Efficient and Accelerated Causal Reasoning Operators, Graphs and Spikes for Earth and Embedded Systems (SEA-CROGS) project.
Sandia National Laboratories is a multimission laboratory managed and operated by National Technology \& Engineering Solutions of Sandia, LLC, a wholly owned subsidiary of Honeywell International Inc., for the U.S.~Department of Energy's National Nuclear Security Administration under contract DE-NA0003525.
This paper describes objective technical results and analysis. Any subjective views or opinions that might be expressed in the paper do not necessarily represent the views of the U.S.~Department of Energy or the United States Government.



\bibliographystyle{abbrv} 
\bibliography{references}

\end{document}